\newtheorem{mydef}{Definition}[section]
\newtheorem{theo}{Theorem}[section]
\newtheorem{prop}{Proposition}[section]
\newtheorem{remark}{Remark}[section]
\newtheorem{assumption}{Assumption}[section]
\newtheorem{mylemma}{Lemma}[section]
\newcommand{\Pinf}{P_{\infty}}
\newcommand{\Qinf}{Q_{\infty}}
\newcommand{\ds}{\displaystyle}
\newcommand{\argmax}{\ensuremath{\mathop{\mathrm{argmax}}}}
\newcommand{\smb}{\left[\begin{smallmatrix}}
\newcommand{\sme}{\end{smallmatrix}\right]}
	\newlength\wex
	\newlength\hex
\definecolor{mycolor1}{rgb}{0.00000,0.44700,0.74100}%
\definecolor{mycolor2}{rgb}{0.85000,0.32500,0.09800}%
\definecolor{mycolor3}{rgb}{0.92900,0.69400,0.12500}%
\definecolor{mycolor4}{rgb}{0.49400,0.18400,0.55600}%
\definecolor{mycolor5}{rgb}{0.46600,0.67400,0.18800}%
\title{Numerical computation and new output bounds for time-limited balanced truncation of discrete-time systems}
\author{Igor Pontes Duff\footnotemark[1] \and Patrick K{\"u}rschner\footnotemark[1]~\footnotemark[2]} 
\begin{document}
\footnotetext[1]{Max Planck Institute for Dynamics of Complex Technical Systems, Magdeburg, Germany}
 \footnotetext[2]{KU Leuven, Electrical Engineering (ESAT), Kulak Kortrijk Campus, Belgium}
\maketitle

\begin{abstract}
	In this paper, balancing based model order reduction (MOR) for large-scale linear discrete-time time-invariant systems in prescribed finite time intervals is studied. The first main topic is the development of error bounds regarding the approximated output vector within the time limits. The influence of different components in the established bounds will be highlighted. After that, the second part of the article proposes strategies that enable an efficient numerical execution of time-limited balanced truncation for large-scale systems. Numerical experiments illustrate the performance of the proposed techniques.
\end{abstract}


\section{Introduction}

In this paper, we consider multi-input multi-output (MIMO) linear time-invariant (LTI) discrete-time dynamical systems.  These systems are governed by a set a difference equations of the form

\begin{equation}\label{eq:LTI}
	\cS : \left\{\begin{array}{rl}
		x(k+1) &= Ax(k) +Bu(k),~\textrm{for}~\,k\in \N =\{0, 1, 2, \dots\}\\
		y(k) &= Cx(k) ,~\,~x(0) = x_0,
	\end{array}\right.
\end{equation}
where $x(k)\in \R^{n}$ is the state-variable,  $u(k)\in \R^m$ is the input, $y(k)\in \R^p$ is the output for every discrete-time $k \in \N $. Here $A\in \R^{n \times n}$, $B \in \R^{n\times m}$,  $C\in \R^{p \times n}$ and the leading dimension $n$ is the order of the system. We denote $\cS = (A, B ,C)$ for the given realization \eqref{eq:LTI}.  We assume that $x_0 = 0$ and reader is referred to \cite{morHeiRA11,baur2014model,beattie2017model} which treat the case of nonzero initial condition for continuous-time systems. 

In this case, we can represent the output as 
\begin{equation}
	y(k) = \sum_{j=0}^{k} h(k-j)u(j) = (h\ast u)(k), 
\end{equation}
where $h$ is the impulse response of the system, given by
\begin{equation}\label{eq:ImpResp}
	h(0) = 0,~h(k) = CA^{k-1}B,~\,\textmd{for}~\,~k =1, 2, \dots.   
\end{equation}
We say that $\cS$ is (asymptotically) stable, if and only if $A$ has its eigenvalues inside the unitary disc, in which case we call the matrix $A$ stable. Otherwise, we say that $A$ is unstable.  For stable systems,  the infinite reachability and observability Gramians $P_{\infty}$ and $Q_{\infty}$ are defined as 
\begin{subequations}\label{eq:InfGramians}
	\begin{align}
		P_{\infty} &= \sum_{k=1}^{\infty} A^{k-1}B\left(A^{k-1}B\right)^T, \\
		Q_{\infty} &= \sum_{k=1}^{\infty} \left(CA^{k-1}\right)^TCA^{k-1},
	\end{align}
\end{subequations}
and they are the unique solution of the following Stein  equations (discrete-time Lyapunov equations)
\begin{subequations}\label{eq:InfSteinEq}
	\begin{align}
		A\Pinf A^T - \Pinf +BB^T &= 0, \\
		A^T\Qinf A^T - \Qinf + C^TC &= 0.
	\end{align}
\end{subequations}
A LTI discrete-time system~\eqref{eq:LTI} is said to be minimal in infinite horizon if $\Pinf\Qinf$ is nonsingular.

Mathematical models of systems~\eqref{eq:LTI} are considered to be large scale, whenever its order is very large, perhaps  $n>10^5$ or more.  This leads to difficulties for tasks involving simulation, optimization or control of this system, motivating the use of a reduced order model (ROM) of the form 
\begin{equation}\label{eq:ROM}
	\hat{\cS} : \left\{\begin{array}{rl}
		\hx(k+1) &= \hat A\hx(k) +\hat B u(k),~\textrm{for}~\,k\in \N \\\
		\hy(k) &= \hat C\hx(k), 
	\end{array}\right.
\end{equation}
where $\hx(k)\in \R^r$, for $k\in \N$, $\hA\in \R^{r \times r}$, $\hB \in \R^{r \times m}$ and $\hC \in \R^{m \times r}$.  The goal is to construct an $\hat{\cS}$ such that $r \ll n$ and still $\hat{y} \approx y$ for a large class of inputs $u$.  Projection based model reduction consists in constructing matrices $W, V \in \R^{n \times r}$ with $W^TV = I_{r}$, such that 
\begin{equation}\label{eq:ProjMatrices}
	\hA = W^TAV,~\,~\hB = W^TB~\,~\textnormal{and}~\,~\hC = CV.
\end{equation}

In order to measure the quality of reduced order models, system norms are defined. Given a stable system $\cS$ as in \eqref{eq:LTI} whose impulse response $h$ is given by \eqref{eq:ImpResp}, its $h_{\infty}$ and  $h_2$ norms are defined as
\begin{align}
\begin{split}
\|\cS\|_{h_{\infty}} &= \sup_{w\in [0,2\pi]} \|C(e^{i\omega}I-A)^{-1}B\|_2,~\hfill~\text{and}~\hfill  
\\ \|\cS\|_{h_{2}}  &=  \left(\sum_{j=0}^{\infty}\|h(j)\|_{F}^2\right)^{1/2}  = \trace{CP_{\infty}C^T)}^{1/2} = \trace{B^TQ_{\infty}B)}^{1/2}.
\end{split}
\end{align}

Balanced truncation (BT) is a model order reduction technique introduced in \cite{morMoo79} allowing to construct such a reduced order model $\hat{\cS}$ by projection. It relies on the concept of simultaneous diagonalization of the reachability and observability Gramians. In other words, the goal is to find a state-space transformation $T\in \R^{n \times n}$ nonsingular, such that \[T\Pinf T^T = T^{-T}\Qinf T^{-1} =  \Sigma_{\infty} = \begin{bmatrix}
\Sigma_{1,\infty} & 0 \\ 0 &\Sigma_{2,\infty}
\end{bmatrix},\]  where $\Sigma_{1, \infty} = \diag{\sigma_{1,\infty}, \dots, \sigma_{r,\infty}}$, $\Sigma_{2, \infty} = \diag{\sigma_{r+1,\infty}, \dots, \sigma_{n,\infty}}$, and $\sigma_{1,\infty} \geq \dots \geq \sigma_{n,\infty} \geq 0$ are the so-called Hankel singular values. Let \[TAT^{-1} := A_{\cB} = \begin{bmatrix}
A_{11} & A_{12} \\ A_{21} & A_{22}
\end{bmatrix}, TB := B_{\cB} = \begin{bmatrix}
B_1 \\ B_2
\end{bmatrix} , CT^{-1} := C_{\cB} = \begin{bmatrix}
C_1 & C_2
\end{bmatrix}. \] The equivalent realization $(A_{\cB}, B_{\cB}, C_{\cB})$ is referred to as the balanced realization.  Then, the projection matrices $V$ and $W$ are taking as the first $r$ columns of $T$ and $T^{-T}$, respectively, and the reduced order model is given by \eqref{eq:ProjMatrices}.   

The reduced order system $\hat{\cS}$ obtained by balancing satisfies an a priori error bound in the $h_\infty$ norm which is given by (cf. \cite[Theorem 7.10]{antoulas2005approximation})
\begin{equation}\label{eq:sumhsvBT}
	\|\cS -\hat{\cS}\|_{h_{\infty}}\leq 2\left(\sum_{k=r+1}^n \sigma_{k}\right) = 2\trace{\Sigma_{2, \infty}}=:\boldsymbol{\sigma}_{r},
\end{equation} 
\emph{i.e.}, the $h_{\infty}$ norm of the error system is bounded by twice the sum of the neglected Hankel singular values. This error bound is also valid in the continuous-time context due to \cite{morGlo84,morEnn84}.

An error bound a posteriori with respect to the $h_2$ norm is also available in \cite{chahlaoui2012posteriori}. It is expressed by\footnote{The expression appearing in \eqref{eq:InfHorErrorBound2}  differs from that presented in Theorem 2 in the paper\cite{chahlaoui2012posteriori}. In fact, we re-done the computation and realize that \eqref{eq:InfHorErrorBound2} is the correct expression.   }
\begin{subequations}\label{eq:InfHorErrorBound}
\begin{align}
\|\cS -\hat{\cS}\|_{h_2} &= \trace{C_2\Sigma_{2,\infty}C_2^T+ 2A_{12}\Sigma_{2,\infty}A^{T}_{:2}Z_{\infty}} + \trace{C_1(\hat P_{\infty} -\Sigma_{1,\infty})C_1^T} \\
&=\trace{B_2^T\Sigma_{2,\infty}B_2+ 2A_{21}^T\Sigma_{2,\infty}A_{2:}Y}  +\trace{B_1^T(\hQ_{\infty}-\Sigma_{1, \infty})B_1}  \label{eq:InfHorErrorBound2}
\end{align} 
\end{subequations}
where $\hat P_{\infty}$ and $\hat Q_{\infty}$ are, respectively, the reachability and observability Gramians of the ROM, which satisfy
\begin{align*}
A_{11}\hat P_{\infty}A_{11}^T -\hat P_{\infty} +B_1B_1^T &= 0, \\
A_{11}^T\hat Q_{\infty}A_{11} -\hat Q_{\infty} +C_1^TC_1 &= 0.
\end{align*} 
The matrices $Y_{\infty}, Z_{\infty}\in \R^{n\times r}$  are the solutions of the Sylvester equations
\begin{subequations}\label{eq:SylvStein_inf}
\begin{align}
AY_{\infty}A_{11}^T -Y_{\infty} +BB_1^T &= 0,\\
A^TZ_{\infty}A_{11} -Z_{\infty} +C^TC_1 &= 0, 
\end{align} 
\end{subequations}
and $A_{:2}^T = \begin{bmatrix}
A_{12}^T & A_{22}^T
\end{bmatrix}.$ It is worth noticing that an $H_2$ error bound for continuous-time systems is also available in \cite[Lemma~7.13]{antoulas2005approximation}. Similar research for stochastic systems can be found in, e.g.,
~\cite{morBenR15,morBenR17a,FreR18}. 


Balanced truncation for continuous- and discrete-time LTI systems was extended by the restriction to given time intervals  in \cite{gawronski1990model}. In this context, one aims at a ROM that is an accurate approximation until a finite time horizon $\tau>0$, but allows the ROM to be inaccurate outside of the time interval. The time-limited (TL) Gramians are defined as
\begin{subequations}\label{eq:TLGramians}
	\begin{align}
		P_{\tau} &= \sum_{k=1}^{\tau} A^{k-1}B\left(A^{k-1}B\right)^T, \\
		Q_{\tau} &= \sum_{k=1}^{\tau} \left(CA^{k-1}\right)^TCA^{k-1},
	\end{align}
\end{subequations}
and satisfy the following Stein equations
\begin{subequations}\label{eq:FiniteSteinEq}
	\begin{align}
		AP_{\tau}A^T - P_{\tau} +BB^T &= FF^T, \label{eq:FiniteSteinEq1} \\
		A^TQ_{\tau}A^T - Q_{\tau} + C^TC &= G^TG,\label{eq:FiniteSteinEq2}
	\end{align}
\end{subequations}
where $F = A^{\tau}B$ and $G = CA^{\tau}$. Even if the pairs $(A,B)$ and $(A^T,C^T)$ are reachable,  the TL Gramians~\eqref{eq:TLGramians} might be only positive semidefinite. This might happen whenever $\tau <n/m$ or $\tau <n/p$.  In this case, one can remove the states that are unreachable and unobservable for the the given time-interval, which are given by the kernels of $P_{\tau}$ and  $Q_{\tau}$. As a consequence, the resulting system is reachable and observable for the given time-interval and the Gramians in \eqref{eq:TLGramians} are positive definite matrices. Henceforth, we will assume that the TL Gramians in \eqref{eq:TLGramians} are positive definite matrices.

The time-limited balanced truncation (TLBT) is obtained by balancing $P_{\tau}$ and $Q_{\tau}$, \emph{i.e.}, finding the state transformation $T$ such that  $TP_{\tau} T^T = T^{-T}Q_{\tau} T^{-1} = \diag{\sigma_1, \dots, \sigma_r}$ and neglecting the states associated to small time-limited Hankel singular values.  Reader should notice that Gramians $P_{\tau}$ and $Q_{\tau}$ also exist in the case $A$ matrix is unstable provided $\forall\lambda\in\lambda(A)\backslash\lbrace 0\rbrace$ it holds $1/\lambda\notin\Lambda(A)$. As a consequence, TLBT is also applicable to unstable systems. On the other hand, for stable systems TLBT is not guaranteed to preserve the stability, but experimental evidence~\cite{morKue18,morRedK18} indicates that this does not deteriorate the approximation quality in the targeted time interval which will be also confirmed by the experiments in this paper. Also the upcoming error bounds will, to some extent, indicate that the occasionally generated unstable reduced order models still provide accurate output approximations. 
Some stability preserving variants of time-/ and frequency-limited BT have been proposed in, e.g., \cite{morGugA04, HaiGIM17,redmann2018output, ImrGUS18} leading to so called modified BT variants.
However, enforcing stability via such modified TLBT variants appears to deteriorate the good approximation quality of TLBT within the time interval and, at the same time, is computational more expensive~\cite{morBenKS16,Kue16,morKue18} for large systems. Hence, we will in the study at hand not consider such stability preserving variants. Additionally, readers should refer to \cite{goyal2017towards,sinani2018mathcal,VuilleminDARPOECC:2014,petersson2014model} for $\mathcal{H}_2$ time-/ and frequency-limited model reduction of continuous-time systems.

In this paper,  time-limited balanced truncation for large-scale linear discrete-time time-invariant systems is studied. The main contribution is twofold.  In the first part, we develop error bounds regarding the approximated output vector within the time limits. Those error bounds are an extension of those given in \cite{chahlaoui2012posteriori} to the time-limited case. However,  they also hold in the case the original system or the reduced order model are unstable.  Additionally,  their asymptotic behavior with respect to the time limits is analyzed and some sufficient conditions to preserve stability are provided.  The second part of the article proposes strategies that enable an efficient numerical execution of time-limited balanced truncation for large-scale systems which has so far not been considered in the literature. These strategies rely in solvers of the time-limited Stein equation using low-rank factors. Different solvers are proposed and their performance are compared. 

The rest of the paper is organized as follows. In Section \ref{sec:Prelim}, the time-limited $h_2$ inner product and norm are defined and characterized using Gramians. Also, a first error bound is provided based on the discrete-time convolution expression. In Section \ref{ref:ErrorBoundTLBT}, an tailored error bound for time-limited balanced truncation is developed. Additionally,  a sufficient condition for stability preservation is provided and the asymptotic behavior of the error bound is studied. In Section \ref{sec:compute}, different solvers based on low-rank factors are proposed to compute the TL Gramians.  Finally, Section \ref{sec:NumExp} carried out  some numerical experiments for  large-scale systems and Section \ref{sec:Conclusion} concludes the paper. 

\section{Preliminary results}\label{sec:Prelim}

\subsection{TL $h_2$ inner product and norm}
From now on, we consider the finite horizon $\tau$ to be fixed. In what follows, we recall the definition of the TL $h_2$ norm and inner-product. 

\begin{mydef}{\bf (time-limited  $h_2$ norm and inner-product)}\label{def:h2innernorm} Let $\cS = (A, B, C)$ and $\hat{\cS} = (\hA, \hB, \hC)$ be two LTI discrete-time dynamical systems as in \eqref{eq:LTI}. Then, the $h_2$ TL inner-product between $\cS$ and  $\hat{\cS}$ is given by 
	\begin{equation}\label{eq:TLinprod}
	\langle \cS, \hat{\cS} \rangle_{h_2,\tau} = \sum_{j=0}^{\tau} \trace{h(j)\hh(j)^T},
	\end{equation}
	where $h(0)=0, h(k) = CA^{k-1}B$ and $\hh(0) = 0, \hh(k)  = \hC\hA^{k-1}\hB$ for $k\in \N^{*}$ are, respectively, the impulse response of $\cS$ and $\hat{\cS}$. Moreover, the $h_2$ TL norm of $\cS$ is given by\footnote{Given a matrix $h\in \R^{p \times m}$, its Frobenius norm is defined as $\|h\|_F^2 = \trace{hh^T}$.} 
	\begin{equation}\label{eq:TLnorm}
	\|\cS\|_{h_2,\tau} = \left(\sum_{j=0}^{\tau} \trace{h(j)h(j)^T}\right)^{1/2} = \left(\sum_{j=0}^{\tau}\|h(j)\|_{F}^2\right)^{1/2}   = \langle \cS, \cS\rangle_{h_2,\tau}^{\frac{1}{2}}. 
	\end{equation}
\end{mydef} 
The reader should notice that if $\tau$ goes to infinite, equations~\eqref{eq:TLinprod} and~\eqref{eq:TLnorm} become the classical definition of inner-product and norm for an infinite time horizon for stable systems. However, the TL norm and inner-product are also well defined for unstable systems. Additionally, they can be characterized by matrix equations as it follows. 
\begin{prop}{\bf(TL inner-product and norm characterization)}\label{prop:h2innernormStein} Let $\cS = (A, B, C)$ and $\hat{\cS} = (\hA, \hB, \hC)$ be two LTI discrete-time dynamical systems as in \eqref{eq:LTI}. Then, the $h_2$ TL inner-product can be computed as
\begin{equation}\label{eq:TLinprodStein}
\langle \cS, \hat{\cS} \rangle_{h_2,\tau} = \trace{CY\hat{C}^T} = \trace{B^TZ\hB}, 
\end{equation}
where \[Y = \sum_{j=1}^{\tau} A^{j-1}B\hB^T(\hA^{T})^{j-1}~\,~ \textnormal{and}~\,~ Z = \sum_{j=1}^{\tau} (A^T)^{j-1}C^T\hC\hA^{j-1}.\] Additionally, if $\alpha\beta \neq 1$, for all $\alpha\in \Lambda(A)$ and $\beta \in \Lambda(\hA)$, the matrices $Y$ and $Z$ are the unique solution of the following Stein-like matrix equations
\begin{subequations}\label{eq:SylvesterSteinequations}
\begin{align}
AY\hA^T -Y +B\hB^T - F\hF^T &= 0, \label{eq:SylvesterSteinequationsY}\\ 
A^TZ\hA -Z +C^T\hC - G^T\hG &=0,\label{eq:SylvesterSteinequationsZ}
\end{align}  
\end{subequations}
where $F = A^{\tau}B, \hF = \hA^{\tau}\hB$, $G= CA^{\tau}$ and $\hG = \hC\hA^{\tau}$.
\end{prop}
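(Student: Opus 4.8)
The plan is to establish the two trace formulas directly from the series definition, verify the matrix equations by a telescoping computation, and then settle uniqueness with a Kronecker-product spectral argument.

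First I would expand the inner product. Since $h(0)=\hh(0)=0$, the defining sum \eqref{eq:TLinprod} runs effectively from $j=1$, and inserting $h(j)=CA^{j-1}B$ and $\hh(j)=\hC\hA^{j-1}\hB$ gives
\[\langle \cS,\hat{\cS}\rangle_{h_2,\tau}=\sum_{j=1}^{\tau}\trace{CA^{j-1}B\,\hB^T(\hA^T)^{j-1}\hC^T}.\]
Pulling $C$ and $\hC^T$ outside the sum by linearity of the trace immediately yields $\trace{CY\hC^T}$ with the stated $Y$. For the second identity I would use that the trace is invariant under transposition and cyclic permutation: rewriting $\trace{h(j)\hh(j)^T}=\trace{h(j)^T\hh(j)}=\trace{(A^{j-1}B)^TC^T\hC\hA^{j-1}\hB}$ and again factoring out $B^T$ on the left and $\hB$ on the right produces $\trace{B^TZ\hB}$ with the stated $Z$. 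These manipulations are routine; the only care needed is the bookkeeping of transposes.

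Next I would verify \eqref{eq:SylvesterSteinequationsY}. The key observation is that multiplying the series for $Y$ by $A$ on the left and $\hA^T$ on the right shifts the summation index, so that $AY\hA^T=\sum_{j=1}^{\tau}A^{j}B\hB^T(\hA^T)^{j}=\sum_{k=2}^{\tau+1}A^{k-1}B\hB^T(\hA^T)^{k-1}$. Subtracting $Y$ then telescopes: all terms with index between $2$ and $\tau$ cancel, leaving only the boundary terms $A^{\tau}B\hB^T(\hA^T)^{\tau}-B\hB^T$. Recognising $A^{\tau}B=F$ and $\hB^T(\hA^T)^{\tau}=(\hA^{\tau}\hB)^T=\hF^T$ gives $AY\hA^T-Y=F\hF^T-B\hB^T$, which is exactly \eqref{eq:SylvesterSteinequationsY}. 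Equation \eqref{eq:SylvesterSteinequationsZ} follows by the same telescoping argument applied to $Z$, with the correction term now given by $G^T\hG$.

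Finally, for uniqueness I would vectorise. Applying the identity $\operatorname{vec}(AY\hA^T)=(\hA\otimes A)\operatorname{vec}(Y)$ turns \eqref{eq:SylvesterSteinequationsY} into the linear system $\bigl((\hA\otimes A)-I\bigr)\operatorname{vec}(Y)=\operatorname{vec}(F\hF^T-B\hB^T)$, which has a unique solution precisely when $\hA\otimes A-I$ is nonsingular, i.e. when $1$ is not an eigenvalue of $\hA\otimes A$. Since the spectrum of $\hA\otimes A$ consists of the products $\alpha\beta$ with $\alpha\in\Lambda(A)$ and $\beta\in\Lambda(\hA)$, this is exactly the hypothesis $\alpha\beta\neq 1$; as the explicit $Y$ already solves the equation, it must be the unique solution, and likewise for $Z$. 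I expect the telescoping step to carry the real content, and the point requiring the most care is correctly matching the boundary terms $F\hF^T$ and $G^T\hG$ to the $\tau$-th power in the series (this is precisely what distinguishes these equations from the standard infinite-horizon Stein equations), whereas the uniqueness claim is a standard consequence of the Kronecker-product spectral formula.
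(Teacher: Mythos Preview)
Your proposal is correct and follows essentially the same route as the paper: expand the series definition to obtain the trace formulas, verify the Stein-like equations by telescoping the finite sum, and invoke the spectral condition $\alpha\beta\neq 1$ for uniqueness. The only difference is that the paper dispatches uniqueness by citing a standard reference rather than spelling out the Kronecker-product argument, and it leaves the second trace identity and the $Z$-equation to the reader with a ``similarly'' remark.
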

\begin{proof}  Notice
	$\langle \cS, \hat{\cS} \rangle_{h_2,\tau} = \trace{ C\left(\sum_{j=1}^{\tau}A^{j-1}B\hB^T (\hA^{T})^{j-1}\right)\hC^T} = \trace{CYC^T}. $ Then,  as an application of the telescopic sum on $AY\hA^T - Y$, one obtains that $Y$ satisfies equation \eqref{eq:SylvesterSteinequationsY}. Moreover, equation \eqref{eq:SylvesterSteinequationsY}  has a unique solution if  and only if  $\alpha\beta \neq 1$, for all $\alpha\in \Lambda(A)$ and $\beta \in \Lambda(\hA)$ (see \cite[Theorem 18.2]{dym2013linear}).  The equivalent result for the matrix $Z$ follows similarly.
\end{proof}

Proposition~\ref{prop:h2innernormStein} states that, if the equations  \eqref{eq:SylvesterSteinequations} have unique solutions, then the solutions can be used to compute the TL inner-product via formula \eqref{eq:TLinprodStein}. As a consequence, the TL norm of a system can be computed via
\begin{equation}\label{eq:TLnormStein}
\|\cS\|_{h_2, \tau}^2 = \trace{CP_{\tau}C^T} = \trace{B^TQ_{\tau}B},
\end{equation}
where $P_{\tau}$ and $Q_{\tau}$ are the solutions of~\eqref{eq:FiniteSteinEq1} and~\eqref{eq:FiniteSteinEq2}.

\begin{assumption} From now on, we assume that  $\alpha\beta \neq 1$, for all $\alpha\in \Lambda(A)$ and $\beta \in \Lambda(\hA)$, so that the equations \eqref{eq:SylvesterSteinequations} always have an unique solution. 
\end{assumption}

\subsection{First characterization of error bound}
Let us assume the discrete-time system $\cS = (A, B, C)$ is the full order model and $\hat{\cS} = (\hA,\hB, \hC)$ is the reduced order model. 
The output of the original system $\cS$ and the reduced system $\hat{\cS}$ can be expressed as 
\[ y(k) = \sum_{j=0}^k h(k-j)u(j),~\textmd{and}~\, \hy(k) = \sum_{j=0}^k \hh(k-j)u(j), \]
where $h(0) =0,$ $h(k) = CA^{k-1}B$, for $k\in \N^*$, is the impulse response of $\cS$, and $\hh(0) =0,$ $\hh(k) = \hC\hA^{k-1}\hB$, for $k\in\N^*$, is the impulse response of $\hat{\cS}$. Hence, the error between $y$ and $\hy$ can be bounded as
\begin{align*}
	\|y(k)-\hy(k)\|_2  &= \left\| \sum_{j=0}^k h(k-j)u(j) - \sum_{j=0}^k \hh(k-j)u(j) \right\|_2 \\
	& \leq \sum_{j=0}^k \left\| \left(h(k-j)-  \hh(k-j)\right) u(j) \right\|_2\\
	& \leq \sum_{j=0}^k \left\| h(k-j)-  \hh(k-j)\right\|_{2} \|u(j) \|_2 \\
	& \leq \sum_{j=0}^k \left\| h(k-j)-  \hh(k-j)\right\|_{F} \|u(j) \|_2, \\
	& \leq \left(\sum_{j=0}^{k} \| h(k) -\hh(k)\|_F^2\right)^{\frac{1}{2}}\left(\sum_{j=0}^{k} \|u(j)\|_2^2\right)^{\frac{1}{2}},
\end{align*}
where in the final step we have applied the Cauchy-Schwarz inequality. By recalling that
\[ \|\cS\|_{h_2,\tau} = \left(\sum_{j=0}^{\tau} \| h(j)\|_{F}^2\right)^{1/2},~\textmd{and}~\,~ \langle \cS, \hat{\cS} \rangle_{h_2,\tau} = \sum_{j=0}^{\tau} \trace{h(j)\hh(j)^T}, \]
one can easily se that
\[ \max_{j=0,1,\dots,\tau}\|y(j)-\hat{y}(j)\|_{2} \leq \left\| \cS-\hat{\cS}\right\|_{h_2,\tau}\left(\sum_{j=0}^{\tau} \|u(j)\|_2^2\right)^{\frac{1}{2}}.  \]
Now, let us first  use the inner-product expression. Hence,
\[\left\| \cS-\hat{\cS}\right\|_{h_2,\tau}^2  = \left\| \cS\right\|_{h_2,\tau}^2 + \left\|\hat{\cS}\right\|_{h_2,\tau}^2 - 2\langle \cS, \hat{\cS} \rangle_{h_2,\tau}. \]
Now, we recall that 
\begin{align*}
	\left\| \cS\right\|_{h_2,\tau}^2 &= \trace{CP_{\tau}C^T} = \trace{B^TQ_{\tau}B},  \\
	\left\| \hat{\cS}\right\|_{h_2,\tau}^2 &= \trace{C_1\hP_{\tau}C_1^T} = \trace{B_1^T\hQ_{\tau}B_1},~\textmd{and} \\
	\langle \cS, \hat{\cS} \rangle_{h_2,\tau} &= \trace{CYC_1^T} = \trace{B^TZB_1}.
\end{align*}
As a consequence, the following error bound result holds.
\begin{prop}\label{prop:FirstErrorBound}The following error bound holds for time-limited balanced truncation of discrete-time systems
	\[ \max_{j=0,1,\dots,\tau}\|y(j)-\hy(j)\|_{2} \leq \epsilon
	\left(\sum_{j=0}^{\tau} \|u(j)\|_2^2\right)^{\frac{1}{2}}, \]
	where 
	\begin{align*} \epsilon^2 &= \trace{CP_{\tau}C^T +C_1\hP_{\tau}C_1^T -2CYC_1^T } \\
		&= \trace{B^TQ_{\tau}B +B_1^T\hQ_{\tau}B_1 - 2B^TZB_1 }
	\end{align*}
	where  $P_{\tau}$ and $Q_{\tau}$ are the TL Gramians of the full order system $\cS$,   $\hat{P}_{\tau}$ and $\hat{Q}_{\tau}$ are the TL Gramians of the reduced order system $\hat{\cS}$, and $Y, Z$ are the solutions of the matrix equations \eqref{eq:SylvesterSteinequationsY} and \eqref{eq:SylvesterSteinequationsZ}. 
\end{prop}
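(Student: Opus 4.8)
The plan is to observe that the derivation immediately preceding the statement has already carried out all of the analytic work, so that the proof reduces to identifying the constant $\epsilon$ with the time-limited norm of the error system and then collecting the trace representations established in Proposition~\ref{prop:h2innernormStein}. First I would recall the Cauchy--Schwarz estimate derived above, namely
\[ \max_{j=0,1,\dots,\tau}\|y(j)-\hy(j)\|_{2} \leq \left\| \cS-\hat{\cS}\right\|_{h_2,\tau}\left(\sum_{j=0}^{\tau} \|u(j)\|_2^2\right)^{\frac{1}{2}}, \]
so that it suffices to prove the two claimed formulas for $\epsilon^2 = \left\| \cS-\hat{\cS}\right\|_{h_2,\tau}^2$.

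Next I would expand this squared norm. Because the time-limited inner product of Definition~\ref{def:h2innernorm} is a finite sum of the traces $\trace{h(j)\hh(j)^T}$, it is bilinear and symmetric, so expanding the error norm gives
\[\left\| \cS-\hat{\cS}\right\|_{h_2,\tau}^2 = \left\| \cS\right\|_{h_2,\tau}^2 + \left\|\hat{\cS}\right\|_{h_2,\tau}^2 - 2\langle \cS, \hat{\cS} \rangle_{h_2,\tau}.\]
I would then substitute each of the three terms by its matrix characterization: the full-order term by $\trace{CP_{\tau}C^T}=\trace{B^TQ_{\tau}B}$ via \eqref{eq:TLnormStein}; the reduced-order term by $\trace{C_1\hP_{\tau}C_1^T}=\trace{B_1^T\hQ_{\tau}B_1}$, obtained by applying \eqref{eq:TLnormStein} to $\hat{\cS}=(A_{11},B_1,C_1)$ with its own TL Gramians $\hP_{\tau},\hQ_{\tau}$; and the cross term by $\langle \cS, \hat{\cS} \rangle_{h_2,\tau}=\trace{CYC_1^T}=\trace{B^TZB_1}$ from Proposition~\ref{prop:h2innernormStein}, where $\hC=C_1$ and $\hB=B_1$. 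Selecting the reachability-type representation in all three summands produces the first expression for $\epsilon^2$, and selecting the observability-type representation produces the second.

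There is essentially no analytic obstacle here, since Proposition~\ref{prop:h2innernormStein} supplies the Stein-equation identities and the Cauchy--Schwarz step is already in place; the only point demanding care is consistent bookkeeping. Specifically, one must apply the two dual representations of Proposition~\ref{prop:h2innernormStein} uniformly across the three summands, and verify that the reduced-order quantities $\hP_{\tau},\hQ_{\tau},Y,Z$ are associated with the balanced realization blocks $(A_{11},B_1,C_1)$ that define $\hat{\cS}$. Under the standing assumption that $\alpha\beta\neq 1$ for all $\alpha\in\Lambda(A)$ and $\beta\in\Lambda(\hA)$, the relevant Sylvester--Stein equations have unique solutions, so both representations of $\epsilon^2$ are well defined and coincide, which completes the argument.
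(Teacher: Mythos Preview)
Your proposal is correct and follows essentially the same route as the paper: the text preceding the proposition already establishes the Cauchy--Schwarz bound $\max_{j\leq\tau}\|y(j)-\hy(j)\|_2\leq\|\cS-\hat\cS\|_{h_2,\tau}\bigl(\sum_{j=0}^{\tau}\|u(j)\|_2^2\bigr)^{1/2}$, expands $\|\cS-\hat\cS\|_{h_2,\tau}^2$ via bilinearity, and substitutes the trace characterizations from Proposition~\ref{prop:h2innernormStein} and~\eqref{eq:TLnormStein}. Your write-up simply collects these steps, as the paper does, with the same bookkeeping regarding the dual reachability/observability representations.
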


Proposition \ref{prop:FirstErrorBound} provides an error bound for the time-limited norm of the error system $\cS -\hat{\cS}$.  It can be computed in practice by solving two TL Stein equations (as in \eqref{eq:FiniteSteinEq}) for the model $\cS$ and the model $\hat{S}$, and one Stein-like equation (as in \eqref{eq:SylvesterSteinequations}). It is worth noting that this bound is valid for every reduced order model $\hat{S}$. Moreover, it holds even in the case the original model or the reduced order model are unstable.  In the next section, we develop an expression of this error bound tailored for a reduced order model arising from TL balanced truncation.

\section{Output error bound to time-limited balanced truncation}\label{ref:ErrorBoundTLBT}

\subsection{Error bound to TL balanced truncation}
Let suppose that $\cS = (A, B, C)$ is a $n$-order balanced systems associated with the time-limited Gramians $P_{\tau} = Q_{\tau} = \Sigma = \diag{\sigma_1, \dots, \sigma_n}$. Let's consider the following partition
\begin{equation}\label{eq:BalReal}
	A = \begin{bmatrix}
		A_{11} & A_{12} \\ A_{21} & A_{22}
	\end{bmatrix},~\,~B= \begin{bmatrix}
		B_1 \\ B_2 
	\end{bmatrix}~~\, C = \begin{bmatrix}
		C_1 & C_2
	\end{bmatrix}~\,\textnormal{and}~\Sigma = \begin{bmatrix}
		\Sigma_1 & \\ & \Sigma_2
	\end{bmatrix}.
\end{equation}
As a consequence, we must have
\begin{subequations}\label{eq:FullBalGram}
	\begin{align}
		A\Sigma A^T -\Sigma +BB^T-F_{\tau}F_{\tau}^T = 0, \label{eq:FullReachGram}\\
		A^T\Sigma A -\Sigma +C^TC -G^T_{\tau}G_{\tau} = 0,
	\end{align}
\end{subequations}
where 
$F_{\tau} = A^{\tau}B = \begin{bmatrix}
F_{1} \\ F_2
\end{bmatrix}, ~\textnormal{and}~\, G_{\tau} = CA^{\tau} = \begin{bmatrix}
G_1 & G_2
\end{bmatrix}.$
The reduced order model obtained by time-limited balanced truncation is  $\hat{\cS} = (\hA, \hB, \hC)$, where $\hA = A_{11} \in \R^{r \times r}$, $\hB = B_1 \in \R^{r \times m}$ and $\hC = C_1 \in \R^{p \times r}$. 

Hence, the time-limited $h_2$ norm of the error system is 
\begin{align} \|\cS_{e}\|_{h_{2,\tau}}^2 &= \trace{B^T\Sigma B - 2B^TZB_1 + B_1\hat{Q}_{\tau}B_1} \nonumber \\
	&= \trace{B^T\Sigma B - 2B_1^TZ_1B_1-2B_2^TZ_2B_1 + B_1\hat{Q}_{\tau}B_1}. \label{eq:FirstNormDev}
\end{align}
By developing the term (2,1) of \eqref{eq:FullReachGram},  we obtain
\[A_{11}\Sigma_1A_{21}^T + A_{12}\Sigma_2A_{22}^T + B_1B_2^T - F_1F_2^T = 0,\]
and consequently 
\[\trace{-2B_2^TZ_2B_1} = \trace{-2B_1B_2^TZ_2} = \trace{2A_{11}\Sigma_1A_{21}^TZ_2+2A_{12} \Sigma_2A_{22}^TZ_2-2F_1F_2^TZ_2}. \]
Substituting in \eqref{eq:FirstNormDev}, yields
\begin{align*}
	\|\cS_{e}\|_{h_{2,\tau}}^2 = \trace{B^T\Sigma B - 2B_1^TZ_1B_1 + 2A_{11}\Sigma_1A_{21}^TZ_2 +2A_{12}\Sigma_2A_{22}^TZ_2 - 2F_1F_2^TZ_2 +B_1^T\hat{Q}B_1}.
\end{align*}
For developing the term $\trace{2A_{11}\Sigma_1A_{21}^TZ_2}$, consider  the entry $(1,1)$  of \eqref{eq:SylvesterSteinequationsZ}: 
\[A_{11}^TZ_1A_{11}+A_{21}^TZ_2A_{21}-Z_1+C_1^TC_1 -G_{1}^T\hat{G} \]
leading to
\begin{align*} 
	\trace{2A_{11}\Sigma_1A_{21}^TZ_2} &= \trace{2\Sigma_1A_{21}^TZ_2A_{11}} \\
	&= \trace{2\Sigma_1Z_1 - 2\Sigma_1A_{11}^TZ_1A_{11} - 2\Sigma_1C_1^TC_1+2\Sigma_1G_1^T\hat{G}}.
\end{align*}
Hence, 
\begin{align*}
	\|\cS_{e}\|_{h_{2,\tau}}^2 = &\trace{B^T\Sigma B - 2B_1^TZ_1B_1 + 2\Sigma_1Z_1 +2\Sigma_1G_1^T\hat{G} -2\Sigma_1A_{11}^TZ_1A_{11}} \\
	&+\trace{-2\Sigma_1C_1^TC_1+2A_{12}\Sigma_2A_{22}^TZ_2 - 2F_1F_2^TZ_2 +B_1^T\hat{Q}_{\tau}B_1}. 
\end{align*}
From now the steps get particularly different from derivations for TLBT for continuous-time systems, because, for discrete-time systems, the reduced order model is not balanced.
Recalling that
\[ \trace{B^T\Sigma B} = \trace{C\Sigma C^T}, ~\,\textmd{and}~\,~ \trace{B_1^T\hQ_{\tau} B_1} = \trace{C_1\hP_{\tau} C_1^T}, \]
gives
\begin{align*}
	\|\cS_{e}\|_{h_{2,\tau}}^2 &= \trace{2A_{12}\Sigma_2A_{22}^TZ_2 +C_2\Sigma_2C_2^T -C_1\Sigma_1C_1^T + C_1\hat{P}_{\tau}C_1^T} \\ 
	&+\trace{-2B_1^TZ_1B_1-2A_{11}\Sigma_1A_{11}^TZ_1 +2\Sigma_1Z_1} \\
	&+\trace{2\Sigma_1G_1^T\hat{G}-2F_1F_2^TZ_2}.
\end{align*}
Since
\[A_{11}\Sigma_1A_{11}^T +A_{12}\Sigma_2A_{12}^T - \Sigma_1 B_1B_1^T - F_1F_1^T =0 \]
it holds
\[\trace{-2B_1^TZ_1B_1-2A_{11}\Sigma_1A_{11}^TZ_1 +2\Sigma_1Z_1}= \trace{2A_{12}\Sigma_2 A_{12}^TZ_1 - 2F_1F_1^TZ_1}.  \]
Summarizing all of these steps together, we have the following theorem.

\begin{theo}\label{theo:BTh2TLbounds} Let $S = \left(\begin{bmatrix} A_{11} & A_{12} \\
	A_{21} & A_{22}
	\end{bmatrix}, \begin{bmatrix}
	B_1 \\  B_2
	\end{bmatrix}, \begin{bmatrix}
	C_1 & C_2
	\end{bmatrix}\right)$ be a balanced system and $\hat{\cS} = (A_{11}, B_1, C_1)$ be the $r$ order reduced model obtained by time-limited balanced truncation. The time-limited $h_2$ norm of the error system is given by
	\begin{equation}\label{eq:LTLBTerror}
		\begin{array}{rcl}
			\|\cS_{e}\|_{h_{2,\tau}}^2 &=& \trace{C_2\Sigma_2C_2^T+ 2A_{12}\Sigma_2A_{:2}^TZ}  +\trace{C_1(\hP_{\tau}-\Sigma_1)C_1^T} \\
			&&+ 2\trace{\Sigma_1G_1^T\hat{G} - F_1F^TZ}, \\
			&=&  \trace{B_2^T\Sigma_2B_2+ 2A_{21}^T\Sigma_2A_{2:}Y}  +\trace{B_1^T(\hQ_{\tau}-\Sigma_1)B_1} \\
			&&+ 2\trace{\Sigma_1F_1^T\hat{F} - G_1G^TY},
		\end{array}
	\end{equation}
	where $A_{:2} = \begin{bmatrix}
	A_{12} \\ A_{22}
	\end{bmatrix},$   $A_{2:} = \begin{bmatrix}
	A_{21} & A_{22}
	\end{bmatrix}$, $F = A^{\tau}B =  \begin{bmatrix}
	F_1 \\ F_2
	\end{bmatrix}$, $G = CA^{\tau} = \begin{bmatrix}
	G_1 & G_2 
	\end{bmatrix}$ and $\hat{G} = CA_{11}^{\tau}$
\end{theo}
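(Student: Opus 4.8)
The plan is to begin from the output error bound of Proposition~\ref{prop:FirstErrorBound} and to eliminate the cross term step by step by feeding in the block equations satisfied by the balanced Gramian and by the Sylvester--Stein factor $Z$. Because $\cS$ is balanced we have $Q_\tau=\Sigma$, so by \eqref{eq:TLnormStein} the starting expression is
\[
\|\cS_e\|_{h_{2,\tau}}^2 = \trace{B^T\Sigma B - 2B^TZB_1 + B_1^T\hQ_\tau B_1}.
\]
Partitioning $B=\smb B_1 \\ B_2 \sme$ and $Z=\smb Z_1 \\ Z_2 \sme$ conformably with \eqref{eq:BalReal} splits the mixed term as $B^TZB_1 = B_1^TZ_1B_1 + B_2^TZ_2B_1$, so the whole computation reduces to expressing the genuinely off-diagonal contribution $\trace{-2B_2^TZ_2B_1}$ through the quantities in \eqref{eq:LTLBTerror}.

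First I would use an off-diagonal block of the balanced reachability equation \eqref{eq:FullReachGram}, which reads $A_{11}\Sigma_1A_{21}^T + A_{12}\Sigma_2A_{22}^T + B_1B_2^T - F_1F_2^T = 0$, to substitute for $B_1B_2^T$ in $\trace{-2B_2^TZ_2B_1}=\trace{-2B_1B_2^TZ_2}$. This produces the term $\trace{2A_{11}\Sigma_1A_{21}^TZ_2}$, which after a cyclic permutation of the trace becomes $\trace{2\Sigma_1A_{21}^TZ_2A_{11}}$. I would then invoke the $(1,1)$ block of the $Z$-equation \eqref{eq:SylvesterSteinequationsZ}, namely $A_{11}^TZ_1A_{11}+A_{21}^TZ_2A_{11}-Z_1+C_1^TC_1-G_1^T\hat{G}=0$, to trade $A_{21}^TZ_2A_{11}$ for $Z_1-A_{11}^TZ_1A_{11}-C_1^TC_1+G_1^T\hat{G}$, thereby removing the last explicitly off-diagonal factor $Z_2$ in favour of diagonal $Z_1$-pieces plus the time-limiting remainder $\Sigma_1G_1^T\hat{G}$.

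Next I would switch between the controllability and observability forms using the balancing identities $\trace{B^T\Sigma B}=\trace{C\Sigma C^T}$ and $\trace{B_1^T\hQ_\tau B_1}=\trace{C_1\hP_\tau C_1^T}$, both immediate from \eqref{eq:TLnormStein} applied to $\cS$ and to the reduced system; splitting $\trace{C\Sigma C^T}=\trace{C_1\Sigma_1C_1^T+C_2\Sigma_2C_2^T}$ and combining it with the $-2\trace{C_1\Sigma_1C_1^T}$ produced above and with $\trace{C_1\hP_\tau C_1^T}$ assembles the block $\trace{C_2\Sigma_2C_2^T + C_1(\hP_\tau-\Sigma_1)C_1^T}$. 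The remaining $Z_1$-dependent block is then collapsed by the $(1,1)$ block of \eqref{eq:FullReachGram}, $A_{11}\Sigma_1A_{11}^T + A_{12}\Sigma_2A_{12}^T - \Sigma_1 + B_1B_1^T - F_1F_1^T = 0$, which rewrites $\trace{-2B_1^TZ_1B_1-2A_{11}\Sigma_1A_{11}^TZ_1+2\Sigma_1Z_1}$ as $\trace{2A_{12}\Sigma_2A_{12}^TZ_1-2F_1F_1^TZ_1}$. Merging $A_{12}\Sigma_2A_{12}^TZ_1$ with the earlier $A_{12}\Sigma_2A_{22}^TZ_2$ into the compact form $A_{12}\Sigma_2A_{:2}^TZ$, and $F_1F_1^TZ_1$ with $F_1F_2^TZ_2$ into $F_1F^TZ$, yields the first identity in \eqref{eq:LTLBTerror}. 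The second identity follows from the fully symmetric argument, interchanging the roles of $(B,Z,F)$ and $(C,Y,G)$ and using \eqref{eq:SylvesterSteinequationsY} together with the observability counterpart in \eqref{eq:FullBalGram}.

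The conceptual content is modest; the main obstacle is the bookkeeping, i.e.\ selecting the correct block of the correct matrix equation at each stage and spotting the trace cancellations that these substitutions create. A genuine structural point specific to this setting is that, unlike continuous-time balanced truncation, the truncated discrete-time and time-limited subsystem is \emph{not} balanced, so $\hP_\tau \neq \Sigma_1$ in general. This is exactly why the correction term $\trace{C_1(\hP_\tau-\Sigma_1)C_1^T}$ and the time-limiting terms $2\trace{\Sigma_1G_1^T\hat{G}-F_1F^TZ}$ do not vanish and must be carried faithfully to the end, rather than being discarded as they would be in the classical infinite-horizon continuous-time derivation.
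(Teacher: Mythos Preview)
Your proposal is correct and follows essentially the same route as the paper: start from the $Z$-form of the error norm in Proposition~\ref{prop:FirstErrorBound}, use the $(1,2)$ block of~\eqref{eq:FullReachGram} to eliminate $B_1B_2^T$, then the $(1,1)$ block of~\eqref{eq:SylvesterSteinequationsZ} to eliminate $A_{21}^TZ_2A_{11}$, switch to the $C$-side via the balancing identities, and finally use the $(1,1)$ block of~\eqref{eq:FullReachGram} to collapse the remaining $Z_1$-terms and merge everything into $A_{:2}^TZ$ and $F^TZ$. Your observation that the truncated discrete time-limited system is not balanced, so that $\hP_\tau\neq\Sigma_1$ must be retained, is exactly the structural point the paper singles out as the discrete-time specific step.
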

Theorem~\ref{theo:BTh2TLbounds} gives an analytic expression to the error bound provided in Proposition~\ref{prop:FirstErrorBound} in the case the reduced order model is obtained by TL balanced truncation. This characterization highlights how the error bound depends on the singular values $\Sigma$ and the time-limited terms $G, \hat{G}, F, \hat{F}$. It should be emphasized that it holds even if the original and reduced order models are unstable, provided the solvability conditions for the involved matrix equations hold. This expression depends on the partition matrix of the balanced full order model, the partitioning of the time-limited Hankel singular values matrices, and the matrices $Y$ and $Z$ appearing in Proposition~\ref{prop:h2innernormStein} for the computation of the inner product.  Readers should notice that the TL error bound differs for the infinite horizon error bound in~\eqref{eq:InfHorErrorBound} from the residual time-limited term 
\begin{equation}\label{eq:TLresidual}
R_{\tau} := 2\trace{\Sigma_1G_1^T\hat{G} - F_1F^TZ}.
\end{equation} 
As one would expect, we will see that if we take $\tau \rightarrow
\infty$, then $R_{\tau} \rightarrow 0$, and the expression given in \eqref{eq:LTLBTerror} will tend to the error bound expression for infinite horizon. In what follows, we will study the impact of TL terms $G, F, \hG$ and $\hF$  in the error bound. 

\subsection{Time-limited residue impact in error bound}
\subsubsection{Stability preservation}
For the infinite horizon case, balanced truncation for discrete-time systems always produce a stable reduced order model which is not automatically the case for the time-limited variant. In what follows we provide a sufficient condition to the reduced order model obtained by TLBT to be stable.  We keep the notation used in last section. 
\begin{prop}{(\bf Stability preservation)}\label{prop:StabilCond} Suppose that 
\[Q = A_{12}\Sigma_2A_{12}^T +B_1B_1^T - F_1F_1^T\geq 0, \] 
and the pair $(A_{11}, Q)$ is controllable.
Then the reduced order model is stable.
\end{prop}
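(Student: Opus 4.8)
The plan is to read off a Stein equation satisfied by $A_{11}$ and $\Sigma_1$ alone, and then invoke a standard Lyapunov-type eigenvalue argument sharpened by controllability. First I would extract the $(1,1)$ block of the full-order balanced reachability equation \eqref{eq:FullReachGram}. Writing out the top-left block of $A\Sigma A^T-\Sigma+BB^T-F_{\tau}F_{\tau}^T=0$ and using that $\Sigma$ is block diagonal gives
\[
A_{11}\Sigma_1 A_{11}^T + A_{12}\Sigma_2 A_{12}^T - \Sigma_1 + B_1 B_1^T - F_1 F_1^T = 0,
\]
which, by the definition of $Q$, is precisely the Stein equation
\[
A_{11}\Sigma_1 A_{11}^T - \Sigma_1 + Q = 0.
\]
Since the retained $\sigma_i$ are positive (a consequence of the standing assumption that the TL Gramians are positive definite), $\Sigma_1>0$ is a positive definite solution of this Stein equation with right-hand side $Q\geq 0$.

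Next I would bound the spectrum of $A_{11}$. Let $\mu\in\Lambda(A_{11})$ and let $u\neq 0$ satisfy $A_{11}^T u=\mu u$, so that $u^* A_{11}=\bar\mu u^*$. Multiplying the Stein equation on the left by $u^*$ and on the right by $u$ yields
\[
(|\mu|^2-1)\, u^*\Sigma_1 u + u^* Q u = 0.
\]
Because $\Sigma_1>0$ we have $u^*\Sigma_1 u>0$, and because $Q\geq 0$ we have $u^* Q u\geq 0$; hence $|\mu|^2-1\leq 0$, i.e. every eigenvalue of $A_{11}$ lies in the closed unit disc.

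To upgrade this to strict stability I would argue by contradiction using the controllability hypothesis. Suppose some eigenvalue satisfies $|\mu|=1$. Then the displayed identity forces $u^* Q u=0$, and since $Q\geq 0$ this gives $Q u=0$ (writing $Q=Q^{1/2}Q^{1/2}$ and noting $\|Q^{1/2}u\|^2=0$). Thus $u^*$ is a left eigenvector of $A_{11}$ with eigenvalue $\bar\mu$ lying in the kernel of $Q$, i.e. $u^* A_{11}=\bar\mu u^*$ and $u^* Q=0$. By the Popov--Belevitch--Hautus test this contradicts the assumed controllability of the pair $(A_{11},Q)$. Therefore no eigenvalue can have modulus one, so all eigenvalues of $A_{11}=\hat A$ lie strictly inside the unit disc and $\hat{\cS}$ is stable.

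The routine parts are the block extraction and the quadratic-form manipulation; the step that deserves the most care is the strictness argument, specifically keeping the left/right eigenvector and complex-conjugate bookkeeping consistent so that the PBH controllability characterization of $(A_{11},Q)$ applies to exactly the vector $u$ produced above. One should also confirm that $\Sigma_1$ is genuinely positive definite rather than merely semidefinite, since it is $u^*\Sigma_1 u>0$ that drives the whole estimate.
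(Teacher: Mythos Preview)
Your proof is correct and follows essentially the same approach as the paper: extract the $(1,1)$ block of \eqref{eq:FullReachGram} to obtain the Stein equation $A_{11}\Sigma_1 A_{11}^T-\Sigma_1+Q=0$, use a left-eigenvector argument to get $|\mu|\leq 1$, and then exclude $|\mu|=1$ via controllability. The only cosmetic difference is that you invoke the PBH test directly, whereas the paper shows $v^*A_{11}^{k-1}Q=0$ for all $k$ and appeals to the Kalman rank criterion; since $v^*$ is already a left eigenvector, your route is in fact the more direct one.
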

\begin{proof}
From the Stein equation~\eqref{eq:FullReachGram} it follows
\begin{equation}\label{eq:SteinStab}
A_{11}\Sigma_1A_{11}^T + A_{12}\Sigma_2A_{12}^T-\Sigma_1 + B_1B_1^T - F_1F_1^T = 0.
\end{equation}
Let $v \in \C^r$ and $\mu\in \C$ be an eigenpair of $A_{11}^T$, \emph{i.e.}, $A_{11}^Tv = \mu v$. Then, multiply \eqref{eq:SteinStab} by $v^*$ (on the left),
and $v$ (on the right) to obtain
\[ (1-|\mu|^2)v^*\Sigma_1 v = v^*\underbrace{\left(A_{12}\Sigma_2A_{12}^T +B_1B_1^T - F_1F_1^T\right)}_{=Q}v \geq 0.\]  
Hence, since $\Sigma_1>0$, this immediately implies $|\mu|\leq 1$. 
Now assume that $|\mu| = 1$.  In this case, we have $v^*Q = 0$. Moreover, if we multiply \eqref{eq:SteinStab} by $A_{11}$ (by the right) and $A_{11}^T$ (by the left), we obtain
\[ A_{11}^2\Sigma_1(A_{11}^T)^2 -A_{11}\Sigma_1A_{11}^T +A_{11}QA_{11}^T = 0. \] Hence, if we multiply, the later equation by $v^*$ and $v$, we obtain
\[ 0=(|\mu|^2-|\mu|^4)v^*\Sigma_1 v  = v^*A_{11}QA_{11}^Tv .  \]
As a consequence, we have $v^*A_{11}Q$. By induction, we conclude that $v^*A^{k-1}_{11}Q = 0$ for $k>0$, which implies that the pair $(A,Q)$ is not reachable. Then $|\mu|<1$ and the matrix $A_{11}$ is stable.
\end{proof}

Proposition \ref{prop:StabilCond} gives a sufficient condition for the ROM produced by TL balanced truncation to be stable. It is worth mentioning that this condition relies on the matrices \eqref{eq:BalReal} of the balanced realization.

\subsubsection{Asymptotic behavior of $A^p$}
 Given matrices $A$ and $A_{11}$, there exist constants $c, \hat{c}>0$ and $\lambda$, $\hat{\lambda} \in \C$ such that
\begin{equation}\label{eq:AssympProp}
\|A^{p}\|_2 \leq c\cdot \lambda^{p}~\,~\textnormal{and}~\,~  \|A_{11}^{p}\|_2 \leq \hc \cdot \hat{\lambda}^{p}
\end{equation}
for all $p\in \N$ and for any matrix norm $\|\cdot\|$. Moreover, if  $\Lambda(A)$ and $\Lambda(A_{11})$ lies inside the open unit disc, \emph{i.e.}, $A$ and $A_{11}$ are stable matrices, then $\lambda$ and $\hat{\lambda}$ can be chosen such that $|\lambda|<1$ and $|\hat{\lambda}|<1$. If  $A, A_{11}$ are assumed to be stable matrices, we know that $A^k \rightarrow 0$ and $A_{11}^{k} \rightarrow 0$ whenever $k \rightarrow \infty$.  Equation~\eqref{eq:AssympProp} describes the asymptotic behavior of the  norm $\|\cdot \|$  of those power matrices, \emph{i.e.},  how fast those sequence  of matrices goes to zero.  

There are different ways to compute $c$, $\hc$, $\hat{\lambda}$ and $\lambda$. For example, in the case where $\|\cdot \|$ is the $p$ induced norm and $A$ is diagonalizable, \emph{i.e.},  $A = XD X^{-1}$ with $X$ nonsingular and $D$ diagonal, we can choose $\lambda = \rho(A)$, the spectral radius of $A$, and $c = \kappa(X) = \|X\|_p\|X^{-1}\|_p$ is the condition number of $X$ in the norm $\|\cdot\|_p$. We refer to \cite{higham1995matrix} for other asymptotic bounds of the form \eqref{eq:AssympProp}. Additionally, the recent paper \cite{crouzeix2017numerical} provides a new improvement  on the bounds of matrix functions, which 
includes power matrices. The main result of \cite{crouzeix2017numerical}  states that
\[ \|f(A)\|_2 \leq (1+\sqrt{2}) \sup_{z\in \Omega} |f(z)|,  \]
where $\Omega = \{ z \in \C, z = v^HAv, ~\,~\textnormal{for all}~\,~v\in \C^n, \|v\| = 1\}$ is the numerical range of the matrix $A\in \C^{n \times n}$ (also called field of values). 
Hence, with $f(z)=z^{\tau}$, the numerical radius $\lambda=r(A):=\max_{z\in\Omega}|z|$, $c=1+\sqrt{2}$, we can always bound
\begin{equation}\label{eq:FOVconst} 
\|A^{\tau}\| \leq (1+\sqrt{2})\cdot \lambda^{\tau}
\end{equation}
because $r(A^{\tau})\leq r(A)^{\tau}$. 
Since in our case, $\tau<\infty$, the above bounds will always be finite even if spectrum or numerical range do not lie inside the unit disc. 
From now one we assume that such $c$, $\hc$, $\hat{\lambda}$, and $\lambda$ as in \eqref{eq:AssympProp} are available. 
	
 \subsubsection{Asymptotic impact of  TL residue}
Let us now discuss the impact of $R_{\tau}$ from equation~\eqref{eq:TLresidual} in the error bound of Theorem~\ref{theo:BTh2TLbounds}. The terms of $R_{\tau}$ can be bounded as
\begin{align*}
2\trace{\Sigma_1G_1^T\hG} &\leq \|\Sigma_1\|_F\|G_1\|_F\|\hG\|_F,   \\
2\trace{F_1F^TZ} &\leq \|Z\|_F\|F_1\|_F\|F\|_F.   
\end{align*}
We recall that, if $V^T = \begin{bmatrix}
I_{r} & 0_{r \times (n-r)}
\end{bmatrix}$, then $F = A^{\tau}B$, $F_1 = V^TA^{\tau}B$, $G_1 = CA^{\tau}V$ and $\hG = C_1A_{11}^{\tau}$. Additionally , the norms of $\|F_1\|_F$, $\|F\|_F $ are bounded by $c\lambda^{\tau}\|B\|_F$,  $\|G_1\|_F$ is bounded by $c\lambda^{\tau}\|C\|_F$,  and  $\|\hG\|_F$ is bound by  $\hat{c}\hat{\lambda}^{\tau}\|C_1\|_F$, where  and $\lambda,\hat{\lambda}, c, \hat{c}>0$ are suitable constants. Moreover, if we assume that $\Lambda(A)$ and $\Lambda(A_{11})$ lies inside the open unit disc, the norms decay fast whenever the value of $\tau$ increases and the term $R_{\tau} \rightarrow 0$ if $\tau$ goes to infinity. As  a consequence, the error bound formulas provided in Theorem~\ref{theo:BTh2TLbounds} coincide with those for the infinite horizon (see equation~\eqref{eq:InfHorErrorBound}) in the limit $\tau \rightarrow \infty$. 
\begin{remark} For the infinite horizon case, where the original  and  the reduced order model are stable, the error bound in \eqref{eq:InfHorErrorBound} can be bounded by 
\begin{equation}\label{eq:UpperBoundInfHorizon}
\|\cS -\hat{\cS}\|_{h_2} \leq \trace{C_2\Sigma_{2,\infty}C_2^T+ 2A_{12}\Sigma_{2,\infty}A^{T}_{:2}Z_{\infty}},
\end{equation}
because the term $\trace{C_1(\hat P_{\infty} -\Sigma_{1,\infty})C_1^T}\leq 0.$ Indeed, the matrix $E_{\infty} = P_{\infty} -\Sigma_{1,\infty}$ is negative definite, since it satisfies the following  Stein equation
\[A_{11}E_{\infty}A_{11}^T - E_{\infty} -A_{12}\Sigma_{2,\infty}A_{12}^T =0, \]
and $-A_{12}\Sigma_{2,\infty}A_{12}^T$ is a negative semi definite matrix. As a consequence, in Equation \eqref{eq:UpperBoundInfHorizon}, we can see explicitly that the decay of singular values will lead to a decay in the error for the infinite horizon case. We believe that this expression is new and it was not present in \cite{chahlaoui2012posteriori}. 
\end{remark}

 \subsubsection{Error bound depending on $\Sigma_2$ and asymptotic parameters}

Now, we wish to explicitly describe the dependency of expression \eqref{eq:LTLBTerror} on the neglected singular values $\Sigma_2$ and on the time-limited terms $F$, $\hat{F}$, $G$, and $\hat{G}$. From now one, we will  assume that $A$ and $A_{11}$ are stable, \emph{i.e.}, that their eigenvalues lie inside the unitary open disc. Additionally, we assume that $|\lambda|<1$ and $\hat{\lambda}<1$.   We will discuss the case where  $A$ or $A_{11}$ are  unstable in Remark~\ref{remark:unstable}.   

Let us first write $E = \hP_{\tau} - \Sigma_1$. As a consequence, $E$ satisfies the following Stein equation
\[A_{11}EA_{11}^T - E -A_{12}\Sigma_2A_{12}^T +F_1F_1^T-\hF\hF^T = 0.  \]
Consider the composition $E = E_{\Sigma_2} + E_{TL}$, where
\begin{align*}
A_{11}E_{\Sigma_2}A_{11}^T - E_{\Sigma_2} -A_{12}\Sigma_2A_{12}^T  &= 0, \\
A_{11}E_{TL}A_{11}^T - E_{TL} +F_1F_1^T-\hF\hF^T &= 0.
\end{align*}
Since $A_{11}$ is stable and $-A_{12}\Sigma_2A_{12}^T$ is a symmetric negative semidefinite matrix, $E_{\Sigma_2}$ is also symmetric negative semidefinite. As a consequence, we can rewrite the term $\trace{C_1(\hP_{\tau}-\Sigma_1)C_1^T}$ as
\begin{equation}\label{eq:BoundE}
\trace{C_1(\hP_{\tau}-\Sigma_1)C_1^T} = \trace{C_1(E_{\Sigma_2} +E_{TL})C_1^T} \leq  \trace{C_1(E_{TL})C_1^T}.
\end{equation}
Since 
$A_{11}E_{TL}A_{11}^T  +F_1F_1^T-\hF\hF^T = E_{TL}$ and $A_{11}$ is stable, $E_{TL}$ can be written as the following infinite series
\begin{equation}\label{eq:InfSum1}
E_{TL} = \ds \sum_{j=1}^{\infty} A_{11}^{j-1}\cF_{TL} (A_{11}^T)^{j-1},~\,\textnormal{with}~\,\cF_{TL} = F_1F_1^T-\hF\hF^T. 
\end{equation}
Consequently,
\begin{align*}
\|E_{TL}\|_2  &\leq \sum_{j=1}^{\infty} \|A_{11}^{j-1}\|_2\|\cF_{TL}\|_2\|(A_{11}^T)^{j-1}\|_2 \\
&\leq \|\cF_{TL}\|_2  \sum_{j=1}^{\infty} \hc^2\cdot(\hat{\lambda}^{j-1})^2 = \|\cF_{TL}\|_2   \frac{\hc^2}{1-\hat{\lambda}^2}.
\end{align*}
Using similar steps one can show that
\begin{align}\label{eq:InfSum2}
	\|Z\|_2 \leq \frac{c\cdot \hc}{1-\lambda\hat{\lambda} }\|\cM\|_2, ~\textnormal{with} ~\, \cM = C^TC_1 - G^T\hG.
\end{align}
Finally, we can bound
\begin{align*}
\left|\trace{C_1(\hP_{\tau}-\Sigma_1)C_1^T}\right|  	&\leq p\|C_1\|_2^2\|E_{TL}\|_2 \leq  \frac{p\cdot\hc^2}{1-\hat{\lambda}^2}\|C_1\|_2^2\|\cF_{TL}\|_2, \\
\left|\trace{2A_{12}\Sigma_2A_{:2}^TZ}\right|           	&\leq 2r\|A_{12}\|_2\|\Sigma_2\|_2\|A_{:2}\|_2\|Z\|_2 \\ &\leq  \sigma_{r+1} \frac{2r\cdot c\cdot \hc}{1-\lambda\hat{\lambda} }\|A_{12}\|_2\|A_{:2}\|_2\|\cM\|_2  \\
|\trace{C_2\Sigma_2C_2^T}| 											  &\leq    p\|C_2\|_2^2\|\Sigma_2\|_2 = p\|C_2\|_2^2 \sigma_{r+1}, \\
|2\trace{\Sigma_1G_1^T\hat{G}}| 								   & \leq 2p\cdot\sigma_1\|G_1\|_2\|\hat{G}\|_2, \\
|2\trace{F_1F^TZ}| 															   &\leq \frac{2m\cdot c\cdot \hc}{1-\lambda\hat{\lambda} }\|F_1\|_2\|F\|\|\cM\|_2,
\end{align*}

Additionally, using \eqref{eq:AssympProp}, we have
\begin{align*}
\|\cF_{TL}\|_2 &\leq \|F_1\|_2^2+ \|\hF\|_2^2 \leq c^2\lambda^{2\tau}\|B\|_2 + \hc^2\hat{\lambda}^{2\tau}\|B_1\|_2, \\
\|\cM\|_2 &\leq \|C\|_2\|C_1\|_2 + \|G\|_2\|\hG\|_2\leq \|C\|_2\|C_1\|_2(1+c\hc \lambda^{\tau} \hat{\lambda}^{\tau}). \\
\end{align*} 
The following theorem assembles all these results.
\begin{theo}\label{theo:SepBounds}  Let $S = \left(\begin{bmatrix} A_{11} & A_{12} \\
	A_{21} & A_{22}
	\end{bmatrix}, \begin{bmatrix}
	B_1 \\  B_2
	\end{bmatrix}, \begin{bmatrix}
	C_1 & C_2
	\end{bmatrix}\right)$ be a balanced system and $\hat{\cS} = (A_{11}, B_1, C_1)$ be the $r$ order reduced model obtained by time-limited balanced truncation. Assume that $\lambda<1$ and $\hat{\lambda}<1$. Then the following bound holds.
	\begin{equation}\label{eq:ErrorBound_Assymp}
	\|S_e\|_{h_{2,\tau}}^2 \leq J(\tau) \cdot\sigma_{r+1} +  J_{TL}(\tau), 
	\end{equation}
	where
	 \begin{align*}
	  	 	J(\tau) &= p\|C_2\|_2^2+ 	\frac{2r c \hc (1+c\hc \lambda^{\tau} \hat{\lambda}^{\tau})}{1-\lambda\hat{\lambda} }\|A_{12}\|_2\|A_{:2}\|_2\|C\|_2\|C_1\|_2, \\
	  	 	J_{TL}(\tau) &= \frac{p\cdot\hc^2}{1-\hat{\lambda}^2}\|C_1\|_2^2(c^2\lambda^{2\tau}\|B\|_2 + \hc^2\hat{\lambda}^{2\tau}\|B_1\|_2) + 2p\cdot\sigma_1c\hc \lambda^{\tau} \hat{\lambda}^\tau \|C\|_2\|C_1\|_2 \\ &+ \frac{2m\cdot c\cdot \hc}{1-\lambda\hat{\lambda} }c^2\lambda^{2\tau}\|B\|_2^2\|C\|_2\|C_1\|_2(1+c\hc \lambda^{\tau} \hat{\lambda}^{\tau}).
	 \end{align*} 
\end{theo}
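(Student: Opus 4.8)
The plan is to treat the exact identity \eqref{eq:LTLBTerror} from Theorem~\ref{theo:BTh2TLbounds} as the starting point and to bound its five constituent trace terms one at a time, assembling the results at the very end. First I would apply the triangle inequality to the squared error norm $\|\cS_e\|_{h_{2,\tau}}^2$, writing it as a sum of the absolute values of the individual traces $\trace{C_2\Sigma_2C_2^T}$, $\trace{2A_{12}\Sigma_2A_{:2}^TZ}$, $\trace{C_1(\hP_{\tau}-\Sigma_1)C_1^T}$, and the two residual pieces $2\trace{\Sigma_1G_1^T\hG}$ and $2\trace{F_1F^TZ}$ constituting $R_\tau$. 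The single term carrying a definite sign, namely $\trace{C_1(\hP_{\tau}-\Sigma_1)C_1^T}$, I would handle separately via the splitting $E=\hP_{\tau}-\Sigma_1=E_{\Sigma_2}+E_{TL}$ introduced before the statement: since $A_{11}$ is stable and $-A_{12}\Sigma_2A_{12}^T\le 0$, the component $E_{\Sigma_2}$ is negative semidefinite and may be discarded, yielding the one-sided estimate \eqref{eq:BoundE}. This is the only place where stability of $A_{11}$ is used to \emph{improve} the bound rather than merely to guarantee convergence.

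Next I would bound each remaining trace by (smallest dimension)$\,\cdot\,\|\cdot\|_2$, cyclically permuting the product first so that the exposed dimensional factor is $p$, $r$, or $m$ as appropriate, and then peeling off norms submultiplicatively. This isolates the four quantities whose size must still be controlled: the neglected singular value $\|\Sigma_2\|_2=\sigma_{r+1}$, the leading one $\|\Sigma_1\|_2=\sigma_1$, and the two solution-dependent norms $\|Z\|_2$ and $\|E_{TL}\|_2$. For the latter pair I would invoke the Neumann-series representation \eqref{eq:InfSum1} and its analogue for $Z$; because $\lambda<1$ and $\hat\lambda<1$, the geometric sums converge to $\hc^2/(1-\hat\lambda^2)$ and $c\hc/(1-\lambda\hat\lambda)$, giving $\|E_{TL}\|_2\le \frac{\hc^2}{1-\hat\lambda^2}\|\cF_{TL}\|_2$ together with \eqref{eq:InfSum2}. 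Finally the time-limited blocks $\cF_{TL}$, $\cM$, $F_1$, $F$, $G_1$, $\hG$ are estimated through the asymptotic bound \eqref{eq:AssympProp}, which supplies the decaying factors $\lambda^{\tau}$ and $\hat\lambda^{\tau}$ and hence the explicit $\lambda^{2\tau}$, $\hat\lambda^{2\tau}$, and $\lambda^{\tau}\hat\lambda^{\tau}$ dependence appearing in $J$ and $J_{TL}$.

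The last step is pure bookkeeping: I would sort the accumulated contributions into those carrying an explicit factor $\|\Sigma_2\|_2=\sigma_{r+1}$ — namely the term from $\trace{C_2\Sigma_2C_2^T}$ and from $\trace{2A_{12}\Sigma_2A_{:2}^TZ}$, which together form $J(\tau)\cdot\sigma_{r+1}$ — and the purely time-limited remainder, which forms $J_{TL}(\tau)$. I expect the main obstacle to be exactly this sorting and constant-tracking: ensuring that the mixed $\sigma_{r+1}\lambda^{\tau}\hat\lambda^{\tau}$ cross term lands inside $J(\tau)$, that the denominators $1-\hat\lambda^2$ and $1-\lambda\hat\lambda$ are paired with the correct numerators, and that the dimensional prefactors $p$, $r$, $m$ are not conflated. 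No single estimate is deep; the difficulty is combinatorial, in keeping the many submultiplicative factors aligned with the prescribed closed forms for $J(\tau)$ and $J_{TL}(\tau)$.
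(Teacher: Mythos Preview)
Your proposal is correct and follows essentially the same route as the paper: start from the exact identity~\eqref{eq:LTLBTerror}, discard the negative semidefinite component $E_{\Sigma_2}$ of $\hP_{\tau}-\Sigma_1$ via~\eqref{eq:BoundE}, bound $\|E_{TL}\|_2$ and $\|Z\|_2$ through their geometric-series representations~\eqref{eq:InfSum1}--\eqref{eq:InfSum2}, estimate each remaining trace by a dimensional factor times a product of spectral norms, and finally insert the power bounds~\eqref{eq:AssympProp} before sorting the contributions into $J(\tau)\sigma_{r+1}$ and $J_{TL}(\tau)$. Your assessment that the only real difficulty is the bookkeeping of constants and dimensional prefactors is exactly right.
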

Theorem~\ref{theo:SepBounds} splits the bounds from~\eqref{eq:LTLBTerror} into  $J(\tau) \sigma_{r+1}$ and $J_{TL}(\tau)$. The term $J(\tau) \sigma_{r+1}$ depends linearly on $\sigma_{r+1}$, \emph{i.e.}, the largest neglected Hankel singular value. The term $J_{TL}(\tau)$ represents the time-limited terms. If $\tau$ goes to infinite we have
\[J_{TL}(\tau) \rightarrow 0 ~\textnormal{and}~\,~ J(\tau) \rightarrow J_{\infty} =  p\|C_2\|_2^2+ 	\frac{2r c \hc}{1-\lambda\hat{\lambda} }\|A_{12}\|_2\|A_{:2}\|_2\|C\|_2\|C_1\|_2.\]

\begin{remark}\label{remark:unstable} In the case $\lambda \geq 1$, or $\hat{\lambda}\geq 1$ , the equations \eqref{eq:BoundE}, \eqref{eq:InfSum1} and $\eqref{eq:InfSum2}$ do not hold anymore. As a consequence, Theorem\ref{theo:SepBounds} is not valid in this form.
However, we can still bound the terms 
\begin{align*}
\left|\trace{C_1(\hP_{\tau}-\Sigma_1)C_1^T}\right|  &\leq p\|C_1\|_2^2\|\hP_{\tau}-\Sigma_1\|_2, \\
\left|\trace{2A_{12}\Sigma_2A_{:2}^TZ}\right|           	&\leq 2r\sigma_{r+1}\|A_{12}\|_2\|A_{:2}\|_2\|Z\|_2, \\
|2\trace{F_1F^TZ}| 	&\leq 2m \|Z\|_2\|F_1\|_2\|F\|_2.
\end{align*}
Hence, the equivalent to Theorem~\ref{theo:SepBounds} has explicit dependencies on $Z$, $\hP_{\tau}$ and $\Sigma_1$. 
\end{remark}
\begin{remark}
 For generalized systems
 \begin{equation}\label{eq:gLTI}
\begin{array}{rl}
		Mx(k+1) &= Ax(k) +Bu(k),~\textrm{for}~\,k\in \N =\{0, 1, 2, \dots\}\\
		y(k) &= Cx(k) ,~\,~x(0) = x_0,
	\end{array}
\end{equation}
with a nonsingular matrix $M\in\Rnn$, the results established so far holds as well with minor modifications that we give next without derivations as those follow the same lines as in the continuous-time situation~\cite{morKue18,morRedK18}.
In particular, the time-limited Gramians  are $P_{\tau}$, $M^TQ_{\tau}M$ and are now the obtained from the solutions of the generalized Stein equations
\begin{subequations}\label{eq:genStein}
\begin{align}
AP_{\tau}A^T - MP_{\tau}M^T +BB^T &= F_MF_M^T,\quad F_M:=M(AM^{-1})^{\tau}B \label{eq:FiniteGenSteinEq1} \\
		A^TQ_{\tau}A^T - M^TQ_{\tau}M + C^TC &= G_M^TG_M, \quad G_M:=C(M^{-1}A)^{\tau}\label{eq:FiniteGenSteinEq2}
\end{align}
\end{subequations}
 Obviously, the infinite Gramians of~\eqref{eq:gLTI} are given by omitting the terms $F_M$, $G_M$ in~\eqref{eq:genStein}.
Since in balanced coordinates $M$ is transformed to the identity and  $M_{11}=I_r$, the matrix equations for Gramians $\hat{P}_{\tau}$ and $\hat{Q}_{\tau}$ of the reduced system remain unchanced. The Sylvester equations~\eqref{eq:SylvesterSteinequations} transform to
   \begin{subequations}\label{eq:GenSylvesterSteinequations}
\begin{align}
AY\hA^T -MY +B\hB^T - F_M\hF^T &= 0, \label{eq:GenSylvesterSteinequationsY}\\ 
A^TZ\hA -M^TZ +C^T\hC - G_M^T\hG &=0,\label{eq:GenSylvesterSteinequationsZ}.
\end{align}  
\end{subequations}
Consequently, by using the adapted Gramians and matrix equations, the error bounds still hold.  
\end{remark}

\subsection{Small-scale example}
Now we illustrate the obtained results by applying BT and TLBT to a small-scale system and computing the infinite horizon (equation \eqref{eq:InfHorErrorBound}) and time-limited bounds (Proposition \ref{prop:FirstErrorBound}  or Theorem~\ref{theo:BTh2TLbounds}). For this, we consider a random stable single-input single-output (SISO) system of order $n = 10$, generated by  the \textsc{Matlab}\xspace command \textsf{rss} and converted to discrete-time system using  a zero-order hold procedure (command \textsf{c2d}) with discretization step d$t = 1$sec. We considered the horizon of $\tau = 20$. Then the infinite horizon and time-limited Gramians and error bounds are computed using \textsc{Matlab}\xspace direct solver (command \textsf{dlyap}). Finally, two reduced models of order $r=6$ are computed using BT and TLBT and, in this case, the two models are stable.

We compare the time domain response of  the corresponding two reduced models. For this, we use as the control input $u(1) = 1, u(k) =0,$ for $k>1$. The results of the absolute errors are depicted in Figure~\ref{fig:SmallArticleEx}, as well as the  bounds from Equation \eqref{eq:InfHorErrorBound} and Proposition \ref{prop:FirstErrorBound} for BT and TLBT, and twice the sum of the neglected (time-limited) Hankel singular values $\boldsymbol{\sigma}_r$ for (TL)BT. By inspecting the time-domain error between the original response and the two reduced-order models, we observe that the TLBT generally produces better compared to BT in the given time-limited interval, as expected. Additionally, the bounds from Equation \eqref{eq:InfHorErrorBound} and Proposition \ref{prop:FirstErrorBound} are satisfied by the errors (see Table~\ref{tab:SmallExample_bounds} for the numerical values). 

Now, we compute also the bounds from Theorem~\ref{theo:SepBounds} for TLBT. To this aim, first we need an estimation of the constants $c, \hat{c}, \lambda, \hat{\lambda}$. We considered two sets of constants, one obtained using the eigenvalue decomposition and another using the field of values and the inequality \eqref{eq:FOVconst}. Those values and the error bounds are displayed  in the Table~\ref{tab:SmallExample_Asymp}. Notice that for the values related to the eigenvalue decomposition, we have that $\lambda<1$ and $\hat{\lambda}<1$. As a consequence, Theorem~\ref{theo:SepBounds} holds, and we use it to compute the error bounds displayed. However, for the  field of values, we have that $\lambda >1$, and so Theorem~\ref{theo:SepBounds} does not hold anymore. To circumvent the issue, we use the ideas in Remark \ref{remark:unstable} to compute  the bound display.  By inspecting Tables \ref{tab:SmallExample_bounds} and \ref{tab:SmallExample_Asymp}, we conclude  that the bounds depending on the asymptotic parameters are less sharp. Indeed, they were developed  in order to study the asymptotic behavior of the error with respect to $\tau$, and  their value is theoretical rather than practical.

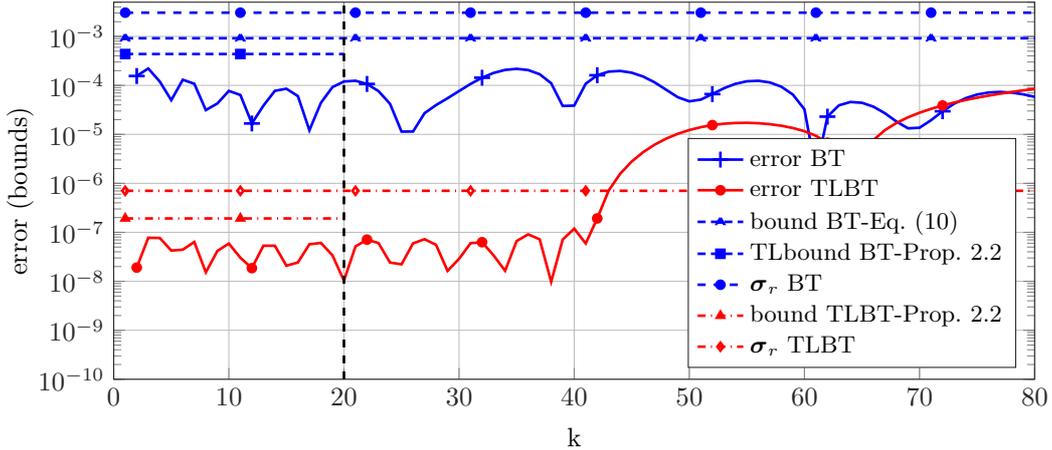
\begin{figure}[t]
		\begin{center}
%
%
\begin{tikzpicture}
\pgfplotstableread{SimpleBounds_Order10.dat}\loadedtable
 \pgfplotscreateplotcyclelist{simple_bound}{%
	{color=blue, solid, mark=+,line width=1pt, mark options={solid,mark size=3pt},mark repeat=10},
	{color=red,solid,mark=*,mark size=1.5pt,line width=1pt, mark repeat=10},
	{color=blue,densely dashed,mark=triangle*,mark size=1.5pt,line width=1pt, mark repeat=10},
	{color=blue,densely dashed,mark=square*,mark options={solid,mark size=1.5pt},line width=1pt, mark repeat=10},
	{color=blue, dashed,mark=*,mark options={solid,mark size=1.5pt},line width=1pt, mark repeat=10},
	{color=red, dashdotted, mark=triangle*, mark options={solid,mark size=1.5pt},line width=1pt, mark repeat=10},
	{color=red, dashdotted, mark=diamond, mark options={solid,mark size=1.5pt},line width=1pt, mark repeat=10},
}
\begin{axis}[%
cycle list name=simple_bound,
width=0.951\wex,
height=\hex, 
scale only axis,
every outer x axis line/.append style={white!15!black},
every x tick label/.append style={font=\color{white!15!black}},
xmin=0,
xmax=80,
xlabel={k},
every outer y axis line/.append style={white!15!black},
every y tick label/.append style={font=\color{white!15!black}},
ymode=log,
ymin=1e-10,
ymax=5*1e-3,
yminorticks=true,
xmajorgrids,
ymajorgrids,
ylabel={error (bounds)},
legend style={draw=white!15!black,fill=white,legend cell align=left,at={(0.98,0.03)},font=\small, anchor=south east, align=left},
legend entries={error BT, error TLBT, bound BT-Eq.~\eqref{eq:InfHorErrorBound}, TLbound BT-Prop.~\ref{prop:FirstErrorBound}, $\boldsymbol{\sigma}_r$ BT, bound TLBT-Prop.~\ref{prop:FirstErrorBound}, $\boldsymbol{\sigma}_r$ TLBT}
]
\addplot table[x index=0,y index=2] {\loadedtable};

\addplot table[x index=0,y index=1] {\loadedtable};

\addplot table[x index=0,y index=4] {\loadedtable};

\addplot table[x index=0,y index=6] {\loadedtable};

\addplot table[x index=0,y index=7] {\loadedtable};

\addplot table[x index=0,y index=3] {\loadedtable};

\addplot table[x index=0,y index=5] {\loadedtable};

\addplot [color=black,line width=1.pt,dashed,forget plot]
table[row sep=crcr]{20	1e-10\\
	20	5e-3\\
};
\end{axis}
\end{tikzpicture}%
	\caption{Output errors $|y(k)-\hy(k)|$, error bounds, and sum of neglected HSVs $\boldsymbol{\sigma}_r$ for (TL)BT reduction of small scale example to order $n=10$, reduced order $r =6$ with time limit $\tau=20$.}\label{fig:SmallArticleEx}
		\end{center}
\end{figure}

\begin{table}[t]
	\centering
	\caption{Summary of the error bounds for small scale example}\footnotesize
	\begin{tabularx}{0.82\textwidth}{|X|l|l|l|l|}
		 \hline
		 Eq. \eqref{eq:InfHorErrorBound} for BT & Prop. \ref{prop:FirstErrorBound} for BT& Prop. \ref{prop:FirstErrorBound} for TLBT & $\boldsymbol{\sigma}_r$ TLBT& $\boldsymbol{\sigma}_r$ BT \\
		\hline
		9.18e-04& 4.37e-04& 1.92e-07& 7.04e-07& 3.1e-03 \\
		\hline
	\end{tabularx}\label{tab:SmallExample_bounds}
\end{table} 
 
 \begin{table}[t]
 	\centering
 	\caption{Constants for asymptotic behavior}\footnotesize
 	\begin{tabularx}{0.7\textwidth}{|X|l|l|l|l|l|}
 		\hline
 		& $c$& $\lambda$ & $\hat{c}$& $\hat{\lambda}$& Thm 3.2 bound\\
 		\hline
 		Eig. Value Decomp. & 12.26 & 0.97& 2.95&  0.97&  276.91\\
 		\hline
 		Field of values & 2.41& 1.06& 2.41& 0.99&  9.93\\
 		\hline
 	\end{tabularx}\label{tab:SmallExample_Asymp}
 \end{table} 
 

\section{Computational Aspects}\label{sec:compute}
 \subsection{Numerical Computation of the Gramians}
As for BT for continuous-time systems, the solution of the large-scale discrete-time Lyapunov equations \eqref{eq:InfSteinEq}, \eqref{eq:FiniteSteinEq} is the
computationally most demanding step. We will restrict the following discussion to the controllability Gramians, since from there the results for the
observability are easily given by replacing $A,~B$ by $A^T,~C^T$.
Especially
in the large-scale situation, directly computing and storing the Gramians is infeasible because, in general, they are large, dense matrices. 
The common practice when $m,p\ll n$ is to compute approximations of low-rank, e.g. $P_{\infty}\approx QYQ^T$ with
$Q\in\Rnk$, $Y=Y^T\in\Rkk$, $k\ll n$ which is motivated by the typically rapid singular value decay of the Gramians, see e.g.,
~\cite{BenKS12d,Sad12,BecT17}. BT is then carried out with the low-rank solution factors of the (infinite or time-limited) Gramians instead of exact Cholesky factors.

There exist
different algorithms for computing the low-rank solution factors $Q,~Y$ by using techniques from large-scale, numerical linear
algebra. For the Stein equations, the expressions~\eqref{eq:InfGramians} directly motivates the Smith method~\cite{Smi68,Pen00} for computing low-rank factors:
\begin{align}\label{smith}
\begin{split}
 P_{k}&=AP_{k-1}A^T+BB^T,\quad k\geq 1,~X_0=0\\
 &=\sum\limits_{j=0}^{k-1} A^jBB^T(A^T)^j=Z_{k-2}Z_{k-2}^T+A^{k-1}BB^T(A^T)^{k-1}=Z_kZ_k^T\approx P_{\infty},
 \\Z_k&:=[B,AB,\ldots,A^{k-1}B].
 \end{split}
\end{align}
Underlying the Smith iteration~\eqref{smith} is the (block) Krylov subspace of order $k$:
 \begin{align*}
   \range{Z_k}=\cK_{k}(A,B)=\range{[B,AB,\ldots,A^{\tau}B]}.
 \end{align*}
Hence, we can also find approximate solutions of~\eqref{eq:InfGramians} via a block Arnoldi process~\cite{Saa90,morJaiK94}.  Let
$Q_k=[q_1,\ldots,q_k]\in\R^{n\times km}$, $q_i\in\R^{n\times m}$ span a orthonormal basis of
$\cK_{k}(A,B)$ with $B=q_1\beta$, $\beta\in\R^{m\times m}$. Suppose the Arnoldi relation $AQ_k=Q_kH_k+q_{k+1}h_{k+1,k}E_k^T$ holds, where
$H_k=Q^T_kAQ_k=[h_{ij}]$ is block upper Hessenberg, and $E_k=e_k\otimes I_m$. Then $Z_k=Q_kR_k$ for a block upper triangular matrix
$R_k=[r_1,\ldots,r_k]\in\R^{mk\times mk}$ and
$r_i=H_k(r_{i-1})$, $2\leq i\leq k$, $r_1=E_1\beta$. Algorithm~\ref{alg:smith} illustrates this procedure.
\begin{algorithm}[t]
  \SetEndCharOfAlgoLine{} \SetKwInOut{Input}{Input}\SetKwInOut{Output}{Output}
  \caption[Smith-Arnoldi method]{Smith-Arnoldi method for Stein equations}
  \label{alg:smith}
  \Input{$A,~B$ as in~\eqref{eq:InfGramians},
    tolerances $0<\varepsilon\ll1$.}
  \Output{$Q_kQ_k^T\approx P_{\infty}$ with $Q_k\in\R^{n\times \ell}$, $Y_k\in\R^{\ell \times \ell}$, 
    $\ell\leq mk\ll n$.}
  $B=q_1\beta$ s.t. $q_1^Tq_1=I_m$, $Q_1=q_1$, $r_{1:m,1:m}=\beta E_1$\;\nllabel{arn_ini} 
  \For{$k=1,2,\ldots$} {%
   $g=Aq_k$, $h_{k+1,1:k}=Q_k^Tg$, $g_+=g-Q_kh_{k+1,1:k}$.\;  
   $q_{k+1}=g_+h_{k+1,k}$ s.t. $q_{k+1}^Tq_{k+1}=I_m$.\nllabel{Arn_ort}\;
    $Q_{k+1}=[Q_k,~q_{k+1}]$.\; 
    $r_{1:k+1,1:k+1}=H_{1:k+1,1:k}r_{1:k+1,1:k}$ (next block column of $R_k$)\;
    }
    $Q_k=Q_kR_k$.
\end{algorithm}
Alternatively, we can impose a Galerkin condition on the Lyapunov residual for $Q_kY_kQ_k^T$ enforcing that $Y_k$ is the
solution of a projected version of~\eqref{eq:InfGramians}, i.e., 
\begin{align}\label{reducedStein}
H_kY_kH_k^T-Y_k+(Q_k^TB)(B^TQ_k) =0, \quad H_k:=Q_k^TAQ_k,
\end{align}
 which can be solved by standard
dense methods. If the quality of the approximation $Q_kY_kQ_k^T$ is not sufficient, $Q_k$ is orthogonally expanded by continuing the Arnoldi process. 
This convergence rate of the Smith iteration depends on the spectral radius of $A$ and can be very slow if $\rho(A)\approx 1$. To overcome this issues, so
called squared Smith methods were discussed in~\cite{Pen00,BenKS12d,LiWCetal12} with limited success.

The occurrence of the matrix functions in time- and frequency limited BT, or more precisely the action of $f(A)$ to $B$, adds an additional computational
difficulty. 
At a first glance, the required monomials $f(z)=z^{\tau}$ in time-limited discrete-time BT appear to be a
comparatively simple situation, especially if $\tau$ is very small relatively to $n$ and the required $\tau$ matrix vector products
with $A$ are affordable. In that case we can directly use the iteration~\eqref{smith} or Algorithm~\ref{alg:smith} for 
the time-limited Gramians~\eqref{eq:TLGramians}. Running~\eqref{alg:smith} for an additional step allows to read off $F$ from the last block column of $Q_k$.
Alternatively, we can use the Galerkin projection framework mentioned above, i.e.,  we build Galerkin approximations 
$F\approx H_k^{\tau}(Q_k^TB)$ and $P_{\tau}\approx Q_kY_kQ_k^T$, where $Y_k$ solves
\begin{align}\label{reducedTLGram}
H_kY_kH_k^T-Y_k+(Q_k^TB)(B^TQ_k)- H_k^{\tau}(Q_k^TB)(B^TQ_k)(H_k^{\tau})^T=0.
\end{align}
These approximations are exact if $\range{Q_k}=\cK_{\tau+1}(A,B)$ because then $\range{A^{\tau}B}\in\range{Q_k}$ and $\range{Z_{\tau+1}}\in\range{Q_k}$ with
$Z_{\tau+1}$ from~\eqref{smith}.

Unfortunately, for large values $\tau\approx n$ and if $\rho(A)\approx 1$ this Galerkin approach or Algorithm~\ref{alg:smith} become impractical as they would
require prohibitively large subspace dimensions. Note that getting the powers of $A$ via approaches like binary powering~\cite[Chapter 4.1]{Hig08} are not
feasibly for large $A$, since successively squaring $A$ destroys its sparsity and, hence, the matrix-matrix multiplications become too costly.

 For achieving a faster convergence rate, i.e. accurate approximations with smaller subspace dimensions, rational Krylov subspaces
\begin{align}\label{ratkrylov}
 \range{Q_k}=\cR\cK_{k}(A,B,\boldsymbol{\xi})=\range{[B,(A+\xi_2I)^{-1}B,\ldots,\prod\limits_{j=2}^{k}(A+\xi_jI)^{-1}B]}
\end{align}
 have been proven to be a viable choice~\cite{DruS11,DruKS11,Bec11}, provided adequate shift parameters $\xi_i\in\C$ are available.
The majority of literature regarding rational Krylov methods for solving
large matrix equations is focused on the continuous-time case and, although the discrete-time case can be dealt with similarly, to the authors
knowledge not much is known about the shift selection. 
A low-rank ADI iteration for Stein equations~\eqref{eq:InfGramians} was proposed in~\cite{BenK14} and later improved in \cite{Kue16}. It is related to both the
Smith method as well as to rational Krylov subspaces. Both rational Krylov und ADI methods for~\eqref{eq:InfGramians} can be used directly
to~\eqref{eq:FiniteSteinEq} if $F$ and $G$ or approximations thereof are given
which, however, is a crucial point because they have to be computed first.

In the present work, we follow an approach similar to the one proposed
in~\cite{morBenKS16,Kue16,morKue18} for the continuous-time setting.  We generate a basis of the rational Krylov subspace~\eqref{ratkrylov} and  
solve the compressed Stein equation~\eqref{reducedTLGram} for $Y_k$ to acquire a low-rank approximation $P_{\tau,k}= Q_kY_kQ_k^T\approx
P_{\tau}$. The basis is expanded (the rational Arnoldi process is
continued) until $P_{\tau,k}$ is of the desired accuracy. As we will see, this allows to jointly approximate $F$ and $P_{\tau}$. 
The rational Krylov method for~\eqref{eq:TLGramians} is shown in
Algorithm~\ref{alg:rksm_tlimdale} and we shall next describe some important aspects this method. Obviously, by omitting all parts related to $F=A^{\tau}B$
Algorithm~\ref{alg:rksm_tlimdale} is applicable to the infinite Gramians~\eqref{eq:InfGramians} as well.

\begin{algorithm}[t]
  \SetEndCharOfAlgoLine{} \SetKwInOut{Input}{Input}\SetKwInOut{Output}{Output}
  \caption[Rational Krylov subspace method for time-limited DALEs]{Rational Krylov subspace
    method for time-limited DALEs~\eqref{eq:FiniteSteinEq}}
  \label{alg:rksm_tlimdale}
  \Input{$A,~B,~\tau$ as in~\eqref{eq:FiniteSteinEq},
    tolerances $0<\varepsilon\ll1$.}
  \Output{$Q_kY_kQ_k^T\approx P_{\tau}$ with $Q_k\in\R^{n\times \ell}$, $Y_k\in\R^{\ell \times \ell}$, 
    $\ell\leq mk\ll n$.}
  $B=q_1\beta$ s.t. $q_1^Tq_1=I_m$, $Q_1=q_1$.\;\nllabel{rksm_ini} 
  \For{$k=1,2,\ldots$} {%
    $H_k=Q_k^TAQ_k$, $B_k=Q_k^TB$.\;  
    $\hF_{k}=H_k^{\tau}B_k$, $F_k=Q_k\hF_{k}$\nllabel{rksm_Fk}.\;
    Compute Gramian defined by $H_k,~B_k$, $\hF_{k}$ (e.g., solve \eqref{reducedTLGram}).\nllabel{rksm_small}\;
      Set $\fR_k:=\|A(Q_kY_kQ_k^T)A^T-(Q_kY_kQ_k^T)+BB^T-F_{k}F_{k}^T\|$ with $F_k\approx A^{\tau}B$.\;
      \If{$\fR_k<\varepsilon\|BB^T-F_{k}F_{k}^T\|$\nllabel{rksm_Lj_test}} {%
      Return $P_{\tau,k}=Q_kY_kQ_k^T$ (truncate if necessary).\;}
    Select next shift $s_{k+1}$.\;    
    Solve $(A-s_{k+1}I)g=q_k$ for $g$.\nllabel{rksm_linsys}\;
    $g_+=g-Q_k(Q_k^Tg)$, $q_{k+1}=g_+\beta_k$ s.t. $q_{k+1}^Tq_{k+1}=I_m$.\nllabel{rksm_orth}\;
    $Q_{k+1}=[Q_k,~q_{k+1}]$.\;  
    }
\end{algorithm}
 
 \paragraph{Solution of the projected problems}
 Two approaches for dealing with~\eqref{reducedTLGram} in line~\ref{rksm_small} are discussed. Following the algorithmic strategy proposed
in~\cite{morBenKS16,Kue16,morKue18}, at first an approximation of $F=A^{\tau}B$ is computed by a projection principle: $F\approx F_k:=Q_k\hF_{k}$,
$\hF_{k}:=H_k^{\tau}(Q_k^TB)$. Since $H_k$ is of size $mk\ll n$, the powers of $H_k$ can be efficiently computed by binary powering which requires $\lfloor
\log_2{\tau}\rfloor$ matrix-matrix multiplications. This can be less costly compared to the computation of the more complicated matrix
functions (matrix exponentials and logarithms) that occur in the continuous-time case.
Since the goal is to approximate the associated
term of the inhomogeneity of~\eqref{eq:TLGramians}, we use a relative norm wise change $\fF:=\|\hF_{k}\hF_{k}^T-\hF_{k-1}\hF_{k-1}^T\|/\|\hF_{k-1}\|^2$ to
assess the accuracy of the current approximation $F_k$. 
Once $\fF\leq \varepsilon_f\leq \varepsilon\ll 1$, we consider this approximation as of sufficient accuracy and start solving the Galerkin
system~\eqref{reducedTLGram} for $Y_k$. This can be done by, e.g., by direct (Bartels-Stewart type) methods~\cite{Bar77} or the Smith iteration~\ref{smith}. The
rational Krylov method is continued until the scaled residual norm $\fR$ with respect to the low-rank solution $Q_kY_kQ_k^T$ falls below a given threshold
$\varepsilon$. In all these further steps, the quality of the approximation $F_k$ of $F$ can be further refined by computing a new $\hF_{k}$ before
solving the compressed Stein equation~\eqref{reducedTLGram}.

Alternatively, the separate computation of $\hF_{k}$ can be avoided since~\eqref{reducedTLGram} can be entirely dealt with by $\tau$ steps of the Smith
iteration~\ref{smith}. Depending on the sizes of $\tau$ and $H_k$, this can be less costly than the first approach and solving~\eqref{reducedTLGram} by a
direct method. By the discussing before, $\hF_k=H_k^{\tau}B_k$ can still be obtained as byproduct of the Smith iteration. Note that for the infinite situation
$\tau=\infty$, using the Smith iteration for~\eqref{reducedStein} requires that the restriction $H_k$ is stable which is theoretically ensured if the
numerical range of $A$ is in the unit disc.

The computational effort resulting from either of these two strategies can be further reduced by solving~\eqref{reducedTLGram} only in each $\mu$th step
(e.g., $\mu=5$) of Algorithm~\ref{alg:rksm_tlimdale}.

\paragraph{Computing the residual of the Stein equations}
Directly computing $\fR$ for assessing the accuracy of the low-rank approximation $Q_kY_kQ_k^T$ is impractical since the Lyapunov residual matrix is a large,
dense matrix. The following Lemma reveals an efficient way to compute the residual norm.
\begin{mylemma}
The residual matrix at step $k$ of Algorithm~\ref{alg:rksm_tlimdale} is given by
 \begin{align*}
 R_k&:=A(Q_kY_kQ_k^T)-Q_kY_kQ_k^T+BB^T-Q_k\hF_k\hF_k\hQ_k^T\\
 &=[g_k,w_k]\smb\psi^T_kY_k\psi_k&I_m\\I_m&0_m\sme[g_k,w_k]^T,\quad g_k:=s_{k+1}q_{k+1}-(I-Q_kQ_k^T)Av_{k+1},\\
 w_k&:=Q_kH_kY_k\psi_k,\quad \psi_k=\Psi_k^{-T}E_k\psi_{k+1,k},
\end{align*}
where $\Psi_k=[\psi_{ij}]\in\R^{km\times km}$ is the matrix of orthonormalization coefficients $\psi_{ij}\in\R^{m\times m}$ accumulated from
line~\ref{rksm_orth}. Hence, $\|R_k\|=\|S_k\smb\psi^T_kY_k\psi_k&I_m\\I_m&0_m\sme S_k^T\|$ and $S_k\in\R^{2m\times 2m}$ is the triangular factor of a thin
QR-factorization of $[g_k,w_k]$. The result is also valid for the infinite Gramians by omitting the term $Q_k\hF_k\hF_k\hQ_k^T$.
\end{mylemma}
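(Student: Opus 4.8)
The plan is to reduce the residual to a rank-$2m$ object by combining the Galerkin (compression) property of $Y_k$ with a rational Arnoldi relation, and then to factor the result. First I would substitute $X=Q_kY_kQ_k^T$ into $R_k=AXA^T-X+BB^T-F_kF_k^T$ and use that both $B$ and the approximation $F_k=Q_k\hat F_k$ lie in $\range{Q_k}$, so that $BB^T=Q_kB_kB_k^TQ_k^T$ and $F_kF_k^T=Q_k\hat F_k\hat F_k^TQ_k^T$ with $B_k=Q_k^TB$. The only term that leaves $\range{Q_k}$ is $A(Q_kY_kQ_k^T)A^T=(AQ_k)Y_k(AQ_k)^T$, so everything hinges on splitting $AQ_k$ into its $Q_k$-component $Q_kH_k$ (with $H_k=Q_k^TAQ_k$) and an orthogonal remainder.

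The key step is the rational Arnoldi relation $AQ_k=Q_kH_k+g_k\psi_k^T$, in which $g_k=s_{k+1}q_{k+1}-(I-Q_kQ_k^T)Av_{k+1}$ is the orthogonal remainder direction and $\psi_k=\Psi_k^{-T}E_k\psi_{k+1,k}$ collects the orthonormalization coefficients. I would derive it from the defining recurrence $(A-s_{k+1}I)v_{k+1}=q_k$ of line~\ref{rksm_linsys}, the orthonormalization $q_{k+1}\psi_{k+1,k}=(I-Q_kQ_k^T)v_{k+1}$ of line~\ref{rksm_orth}, and the accumulation of all such coefficients into the block upper triangular matrix $\Psi_k$; expressing the $q_j$ through the solved directions introduces the factor $\Psi_k^{-T}$, while $E_k$ isolates the last block and the shift $s_{k+1}$ enters through $Av_{k+1}=q_k+s_{k+1}v_{k+1}$. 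Pinning down the precise shape of $g_k$ and $\psi_k$ from this rational recurrence is the main obstacle, and it is the only ingredient that genuinely differs from the infinite-horizon case; the bookkeeping parallels the continuous-time analyses in the cited references.

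With the relation in hand I would expand $(AQ_k)Y_k(AQ_k)^T=Q_kH_kY_kH_k^TQ_k^T+Q_kH_kY_k\psi_kg_k^T+g_k\psi_k^TY_kH_k^TQ_k^T+g_k(\psi_k^TY_k\psi_k)g_k^T$. Collecting the purely $Q_k$-framed terms yields $Q_k\bigl(H_kY_kH_k^T-Y_k+B_kB_k^T-\hat F_k\hat F_k^T\bigr)Q_k^T$, which vanishes because $Y_k$ solves the compressed equation~\eqref{reducedTLGram}. Putting $w_k=Q_kH_kY_k\psi_k$, the remaining three terms are $R_k=w_kg_k^T+g_kw_k^T+g_k(\psi_k^TY_k\psi_k)g_k^T$, which a direct block multiplication identifies with $[g_k,w_k]\begin{bmatrix}\psi_k^TY_k\psi_k & I_m\\ I_m & 0_m\end{bmatrix}[g_k,w_k]^T$, as claimed. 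For the norm I would take a thin QR factorization $[g_k,w_k]=\mathcal{Q}S_k$ with $\mathcal{Q}$ having orthonormal columns and $S_k\in\R^{2m\times 2m}$ upper triangular; invariance of the norm under $\mathcal{Q}$ then gives $\|R_k\|=\|S_k\begin{bmatrix}\psi_k^TY_k\psi_k & I_m\\ I_m & 0_m\end{bmatrix}S_k^T\|$. The infinite-Gramian statement follows verbatim by dropping every $\hat F_k$ term, since then $Y_k$ solves~\eqref{reducedStein} and the same cancellation applies.
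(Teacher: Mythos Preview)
Your proposal is correct and follows precisely the route the paper indicates: it invokes the rational Arnoldi relation $AQ_k=Q_kH_k+g_k\psi_k^T$ from~\cite{DruS11} and then expands the discrete-time quadratic term $(AQ_k)Y_k(AQ_k)^T$ as in~\cite{morJaiK94,BouHJ11}, letting the Galerkin condition~\eqref{reducedTLGram} kill the $Q_k$-framed block and leaving the rank-$2m$ remainder. The paper's own proof only cites these two ingredients without spelling out the computation, so your argument is in fact a fleshed-out version of exactly what the authors had in mind.
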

\begin{proof}
 The result can be easily established by following the derivations of the associated results for the rational Arnoldi for continuous-time
equations~\cite{DruS11} and combining those with results regarding the standard Arnoldi methods for discrete-time equations~\cite{morJaiK94,BouHJ11}. 
\end{proof}

\paragraph{Shift parameter selection}
 Having suitable shift  parameters $\xi_2,\ldots,\xi_{k}$ available is crucial for a rapid convergence of the rational Arnoldi method.
 Two selection strategies are employed here. At first, alternating shifts $\xi_{j}=(-1)^{j}$, $2\leq j\leq k$ are
used\footnote{Communicated by Stefano Massei (EPF Lausanne).} which formally corresponds to the extended Krylov subspace setting ($\xi_{2j-1}=\infty$, $\xi_{2j}=0$) from the continuous-time case~\cite{Sim07},
and applying a Cayley transformation to map $\overline{\C_-}$ into the closed unit disc. Since with this choice, only two different coefficient matrices $A\pm
I$ occur in the linear systems in line~\ref{rksm_linsys}, precomputing and afterwards reusing sparse LU-factorizations $L_{\pm}U_{\pm}=A\pm
I$ in each step can substantially reduce the computation times for solving the linear systems.

\smallskip
The second shift selection approach is more general and
modifies the strategy proposed in~\cite{DruS11} by selecting shifts adaptively from
the boundary of the unit disc. Suppose the unit circle is discretized into $h_s\in\N$ points, 
$\Xi:=\lbrace \exp{\frac{\jmath 2\Pi i}{h_s}},~1\leq i\leq h_s\rbrace$, set $m=1$ for simplicity, and let $\theta_i\in\Lambda(H_k)$. 
The next shift $s_{k+1}$ is then obtained by maximizing the rational function associated to the current space $\cR\cK_{k}$, i.e.,
\begin{align}\label{rk_adapt}
 s_{k+1}=\argmax_{\xi\in\Xi} |r_k(\xi)|,\quad r_k(\xi)=\prod\limits_{j=1}^k\tfrac{\xi-\theta_j}{\xi-\xi_j},
\end{align}
For $m>1$, there are $mk$ Ritz values $\theta_i$ and each previous shift $s_j$ is taken $m$ times in the denominator of $r_k$ in~\eqref{rk_adapt} as
in~\cite{DruS11}. In the second variant the returned shifts are complex numbers. By demanding that each complex shift is followed by its complex conjugate, the
amount of complex arithmetic operations can be reduced following the machinery in, e.g.,~\cite{Ruh94b}, that will ensure the construction of real, low-rank
solution factors $Q_k$, $Y_k$.
\paragraph{Generalized systems}
The generalized Stein equations~\eqref{eq:genStein} corresponding to~\eqref{eq:gLTI} are handled as in the continuous-time setting by implicitly using the algorithms on an equivalent standard state-space system defined by, e.g., $A_M:=L_M^{-1}AU_M^{-1}$, $B_M:=L_M^{-1}B$, $C:=CU_M^{-1}$ with a precomputed sparse factorization $M=L_MU_M$. Afterwards, the obtained low-rank solution factors $Z$ have to be transformed back via $U_M^{-1}Q_k$. 
 \subsection{Computing the error bounds}
We will briefly discuss the practical usage of the mentioned error bounds.
For computing the error bounds in~\eqref{eq:InfHorErrorBound} and Proposition~\ref{prop:FirstErrorBound} after a reduction of large-scale systems, the low-rank Gramian approximation are used in the associated places, e.g., in $\trace{B^TQ_{\tau}B}\approx\trace{B^TZ_{Q_{\tau}}Z_{Q_{\tau}}^TB}$. The computation of the terms involving the Gramians of the reduced order model requires solving Stein equations of dimension $r$ which can be done direct, dense methods. The terms involving the mixed Gramians require solving Sylvester equations~\eqref{eq:SylvStein_inf}, \eqref{eq:SylvesterSteinequations} where one defining coefficient is large and sparse but the other small and dense. There are specialized solvers for this particular situation, e.g.~\cite{morBenKS11}, that require $r$ sparse linear systems to be solved. One should be aware of that, since only approximate Gramians are used, the expression involving the controllability Gramians might not be identical to the expression with the observability Gramians. For the error bound in Proposition~\ref{prop:FirstErrorBound} for time-limited BT, this effect might be more pronounced since also only approximations of $F,G$ are available in practise which enter~\eqref{eq:SylvesterSteinequations}. In the upcoming we, therefore, use the average of both expressions. Another frequent observation is that the traces with positive and negative signs are very close to each other, e.g.,  $\trace{CP_{\tau}C^T +C_1\hP_{\tau}C_1^T} \approx \trace{2CYC_1^T }$, which can lead to numerical cancellation or even negative values for the complete trace. This seems to be especially an issue if the reduced order model is already very accurate. Hence, we take absolute values $\vert\trace{CP_{\tau}C^T +C_1\hP_{\tau}C_1^T-2CYC_1^T }\vert$ to circumvent these effects.

It is clear that the bound in Theorem~\ref{theo:BTh2TLbounds} is not accessible for large-scale system because the neglected quantities such as $B_{2}$,  $A_{12}$, $C_{2}$ are not available in a practical implementation of TLBT.

Some of these unknown quantities are also present in the bound in  Theorem~\ref{theo:SepBounds}. Additionally, the constants $c,\hat c$, $\lambda$, $\hat \lambda$ are required. Here, the approach used for bounding the powers of $A$, $A_{11}$ matters. When 
$\lambda$, $\hat \lambda$ represent the largest magnitude eigenvalues they can be easily computed for $A_{11}$ and estimated for $A$ by, e.g., an Arnoldi process. The constants $c,\hat c$ are then the condition numbers of the eigenvector matrices, which is a difficult to get quantity for large matrices unless the matrices are normal, i.e. $c=1$. Applying the Crouzeix-Palencia result~\cite{crouzeix2017numerical}, however, simply uses $c=\hat c=1+\sqrt{2}$ and the largest value of $z^{\tau}$ on the numerical range of $A$. It holds $\sup\vert z^{\tau}\vert\leq \alpha^{\tau}$, where $\alpha:=\sup\vert z\vert$ is the numerical radius of $A$ which can also be efficiently estimated by approaches utilizing an Arnoldi process see, e.g.,~\cite{Hoc11}.

\section{Numerical Experiments}\label{sec:NumExp}
In this section we test the model order reduction methods and the algorithms for acquiring low-rank factors of the Gramians. 
All experiments are carried out with implementations in \matlab{}~2016a on a \intel\coretwo~i7-7500U CPU @ 2.7GHz with 16 GB RAM.
\subsection{Used test cases}
Since the majority of model reduction literature discussed the continuous-time situation, there is only a comparatively limited supply of test cases available.
We use some discrete-time systems from \cite{GugGO13} as well as artificially generated and freely scalable test systems, summarized with some additional info
in Table~\ref{tab:ex} including the spectral radius $\rho=\max\limits_{z\in\Lambda(A,M)}\vert z\vert$.
Consider a positive definite, diagonally dominant matrix $S=L+U+D$, where $L,U$ and $D$ are its strictly upper, lower, and, respectively, diagonal part. Here, $S$ is the matrix associated to a centered finite difference discretization of the Laplace
operator on the unit disc. The Jacobi (\textit{Jac})  iteration $v_{k+1}=D^{-1}(L+U)v_k+D^{-1}b$ for the linear system $Sv=b$ represents a basic generalized discrete-time
system with coefficients $A=L+U$ and $M=D$ satisfying $\Lambda(D^{-1}(L+U))=\Lambda(L+U,D)\subset\D$, see, e.g.~\cite[Chapter 11.2]{GolV13}. Likewise, the Gauss-Seidel (\textit{GS}) iteration is given by $A=L$, $M=U+D$ with $\Lambda(L,U+D)\subset\D$ but not requiring diagonal dominance of $S$. The input and output maps $B,C$ for the  \textit{Jac}, \textit{GS} examples are chosen randomly from a uniform distribution on $[0,1]$.
\begin{table}[t]
  \centering
  \caption{Overview of examples}\footnotesize
  \begin{tabularx}{\textwidth}{l|l|l|X|l}
    Example & $n$ & $m,~p$&details&$\rho$\\
    \hline
    \textit{skl}&24389&4, 6&discrete-time system "sparse-skewlap3d-mod-1" from~\cite{GugGO13}\footnote&0.91372\\
    \textit{Jac}&31064&5, 5&Jacobi iteration for $S=$\texttt{delsq(numgrid('D',200))}&0.99985\\
    \textit{GS}&31064&5, 5&Gauss-Seidel iteration for $S$&0.9997\\
  \end{tabularx}\label{tab:ex}
\end{table}
\footnotetext[5]{Available at~Mert
G{\"u}rb{\"u}zbalaban's webpage \url{http://mert-g.org/software/}}

\subsection{Approximation of Gramians and matrix powers}
We start testing the approximation of the infinite and time-limited Gramians as well as $F=A^{\tau}B$ by the methods described in Section~\ref{sec:compute}: the Smith method from Algorithm~\ref{alg:smith} and the rational Krylov subspace method (Algorithm~\ref{alg:rksm_tlimdale}) using two types of shifts: $\xi_{j}=(-1)^{j}$ (RKSM($\pm1$)) and the adaptive selection on the unit circle (RKSM($\D$)). We also compare with the LR-ADI iteration for discrete-time Lyapunov equations~\cite{BenK14,Kue16}. For the time-limited equations~\eqref{eq:TLGramians} this is done via a hybrid approach, where the approximation $F$ obtained from RKSM($\pm1$) is used to set up the inhomogeneity. The time-limited Gramians are considered with two different time limits to gain insight how $\tau$ influences the computations.
The desired accuracy is for all cases is 
\begin{align*}
\fR:=\|AP_kA^T-P_k+BB^T-F_kF_k^T\|/\|BB^T-F_kF_k^T\|\leq \tau_P:=10^{-8}
\end{align*}
and $\|BB^T-F_kF_k^T\|/\|F_kF_k^T\|\leq \tau_f=10^{-8}$ is used for the approximation of $F$ computed in Algorithm~\ref{alg:rksm_tlimdale}.
The exception is the Smith method for the time-limited Gramians, which is carried out for exactly $\tau$ steps, hence providing exact results (up to round-off). After termination, the computed Gramians approximations are truncated by means of an eigenvalue decomposition and keeping only those eigenpairs with $\sum(\lambda_i(P)) > 10^{-12}\lambda_{\max}(P)$. The results are summarized in Table~\ref{tab:lrf_compare}, where the approximation of $F$ obtained by the Smith method is used for the final residual norms $\fR$ regarding the time-limited Gramians. Apparently, for small final times $\tau$ the Smith method can be competitive
 in terms of the computation time especially for the \textit{skl} and \text{jac} examples. It requires, e.g., the least amount of time for $\tau=50$ and the \textit{skl} example among all tested methods. Due to the comparatively small spectral radius of $A$ in the \textit{skl} example, the Smith method delivers also competitive times for the infinite Gramians, but fails to deliver the required accuracy for the other two examples. For all examples, the LR-ADI iteration appears to require the smallest computation times for the infinite Gramians. 
Considering the dimensions of the built up subspaces, however, indicates that the Smith method generates substantially larger spaces compared to the other approaches. Also the obtained ranks after truncation seem to be somewhat higher than for the other methods.
 For approximating the time-limited Gramians, the RKSM approaches seem to be a viable choice with respect to both computational time and size of the subspaces, especially for larger values of $\tau$. The used shift generation strategy has a noticable influence: while for the \textit{skl} example using the shifts $\pm 1$ leads to less consumed time than the shifts from the unit circle ($\D$), is is the other way around for the \textit{jac} example, and for the \textit{GS} example both shift approaches lead to similar results. The obtained subspace dimensions generated with RKSM($\D$) are almost all cases smaller compared to  RKSM($\pm 1$). The smaller computation times of RKSM($\pm 1$) for the  \textit{skl} and \textit{GS} examples are a result of the reuse of LU-factorizations of $A\pm M$ for the linear systems as explained before. For the \textit{jac} example these savings in solving the linear systems were nullified by the substantially higher subspace dimensions which resulted in much higher times for solving the projected problems.

However, for most examples the substantial discrepancy between subspace dimension and rank after truncation indicates that further enhancements by selecting better shifts are possible. We plan to pursue this topic in future research. 
For the time-limited Gramians, the hybrid approach of RKSM and LR-ADI appears to yield similar results than the pure RKSM approach.

To conclude this first experimental phase, for small final times $\tau$ (and/or a small spectral radius of $A$), the Smith method can be a viable choice for generating the low-rank factors of the (time-limited) Gramians. For larger $\tau$ (and/or spectral radii close to one), the rational Krylov approach appears to be superior, even with the basic shift selection strategies employed here. If $\tau=\infty$, the LR-ADI iteration is often the fastest method.

\begin{table}[t]
  \centering
  \caption{Column dimension $d$ of built up low-rank factors before truncation, rank rk after truncation, final residual norm $\fR$, and computation
    time $t_c$ (in seconds) of the approximation of
    $P$, $P_{\tau}$ by different methods.}\scriptsize
  \begin{tabularx}{1\linewidth}{|l|X|l|l|l|l|l|l|l|l|l|l|l|l|}
    \hline
    \multicolumn{2}{|l|}{}&\multicolumn{4}{c|}{$P_{\infty}$}&\multicolumn{4}{c|}{$P_{\tau=50}$
    } &\multicolumn{4}{c|}{$P_{\tau=150}$}\\
    \cline{3-14}
    Ex.&method&$d$&rk&$\fR$&$t_c$&$d$&rk&$\fR$&$t_c$&$d$&rk&$\fR$&$t_c$\\
    \hline \multirow{4}{*}{%
        \textit{skl}
    }
&Smith&680&149&9.4e-09&27.4&200&105&1.5e-15&1.7&600&145&1.4e-15&15.0\\ 
&RKSM$(\pm1)$&140&91&2.1e-10&21.4&240&64&7.3e-13&9.5&240&85&4.9e-13&9.5\\ 
&RKSM$(\D)$&128&91&3.0e-10&42.3&168&64&7.3e-13&41.4&184&85&4.9e-13&45.6\\ 
&ADI&64&64&5.9e-09&15.5&176&69&3.1e-09&36.6&152&75&4.3e-09&32.6\\ 
    \hline
		\multicolumn{2}{|l|}{}&\multicolumn{4}{c|}{$P_{\infty}$}&\multicolumn{4}{c|}{$P_{\tau=100}$
    } &\multicolumn{4}{c|}{$P_{\tau=200}$}\\
    \hline \multirow{4}{*}{%
        \textit{Jac}
    }
&Smith&1500&246&7.0e-01&347.3&500&201&3.2e-13&53.2&1000&230&7.4e-13&173.4\\ 
&RKSM$(\pm1)$&700&156&1.9e-09&381.7&700&111&1.0e-10&172.5&700&121&2.3e-10&179.0\\ 
&RKSM$(\D)$&630&156&2.1e-09&342.8&360&111&1.8e-10&52.6&380&121&2.3e-10&69.6\\ 
&ADI&230&218&8.9e-09&13.0&570&117&5.5e-09&92.9&540&127&7.0e-09&79.5\\ 
    \hline
		\multicolumn{2}{|l|}{}&\multicolumn{4}{c|}{$P_{\infty}$}&\multicolumn{4}{c|}{$P_{\tau=150}$
    } &\multicolumn{4}{c|}{$P_{\tau=250}$}\\
    \hline \multirow{4}{*}{%
        \textit{GS}
    }
&Smith&1500&209&6.0e-01&291.4&750&191&3.9e-13&60.6&1250&206&5.7e-13&166.4\\ 
&RKSM$(\pm1)$&325&105&4.3e-09&188.4&375&92&8.6e-10&24.2&325&97&8.6e-10&18.5\\ 
&RKSM$(\D)$&410&105&4.3e-09&59.8&280&92&8.6e-10&20.4&280&97&1.1e-09&20.6\\ 
&ADI&135&135&3.8e-09&5.0&310&86&9.9e-09&25.1&290&97&3.0e-09&20.6\\  
    \hline
  \end{tabularx}\label{tab:lrf_compare}
\end{table}

\subsection{Model reduction results and error bounds}
Now we carry out infinite and time-limited balanced truncation employing low-rank Gramian approximations generated from the experiments before. It is noteworthy that, apart from different computations times, the obtained reduction results were largely unaffected by the employed method for generating the low-rank factors, provided the accuracy threshold was achieved.
Table~\ref{tab:mor_results} lists the results obtained by reducing the systems to different order $r$: the largest output error in the considered time interval $\cE_{\max}:=\max\limits_{0\leq k\leq  \tau}\|y(k)-\hy(k)\|_2$, the error bounds \eqref{eq:InfHorErrorBound} and Proposition~\ref{prop:FirstErrorBound} for BT and, respectively, TLBT, twice the sum of the neglected (time-limited) Hankel singular values, $2\sum\limits_{i\geq r+1}\sigma_i$, and the spectral radius of $A_r$ to access stability.  
For selected reduced orders, Figures~\ref{fig:morerror_skl}--\ref{fig:morerror_GS} illustrate the output errors $\|y(k)-\hy(k)\|_2$ against time $k$ as well as the error bounds and HSV sums.
From the gathered data it is apparent that, in the targeted time interval $[0,\tau]$, TLBT achieves always smaller reduction errors and also the error bound from Proposition~\ref{prop:FirstErrorBound} takes smaller values than the counterpart~\eqref{eq:InfHorErrorBound} for unrestricted BT. The doubled sum of neglected HSVs is smaller for TLBT. All this is visually visible in Figures~\ref{fig:morerror_skl}--\ref{fig:morerror_GS}. As expected after passing the time limit $\tau$, the accuracy of the TLBT models worsens to a point $\tilde k\geq \tau$ where BT is more accurate.
We also observe from Table~\ref{tab:mor_results} that approximately half of the  reduced order model generated by TLBT are unstable. We see this, e.g. in Figures~\ref{fig:morerror_jac}--\ref{fig:morerror_GS}, where the output error drastically decreases after passing the time limit $\tau$.  Comparing the largest output errors $\cE_{\max}$ and the sums of neglected HSVs for TLBT in Table~\ref{tab:mor_results} suggest that, although a bound of the form~\eqref{eq:sumhsvBT} is not given for TLBT, the HSV sum could be used for adaptively determining suitable the reduced order $r$ as it is done in unlimited BT.  To underline this point, we repeat the model reduction experiment but let (TL)BT determine to reduced orders $r$ adaptively such that 
\begin{align}\label{adaptorderhsv}
2\sum_{k=r+1} \sigma_{k}\leq\epsilon_{hsv},
\end{align}
for different given reduction tolerances $0<\epsilon_{hsv}<1$. The results are summarized in Table~\ref{tab:mor_results_adapt} and indicate that this adaptive reduced order determination works for TLBT as fine as for unlimited BT. Moreover, TLBT appears to yield smaller reduced order models of similar accuracy compared to BT. This is a similar observation as for continuous-time TLBT~\cite{morRedK18}.

\begin{table}[t]
  \centering
  \caption{Results and error bounds for BT and TLBT model reduction to fixed orders $r$.}\scriptsize
  \begin{tabularx}{1\linewidth}{|X|l|l|l|l|l|l|l|l|l|}
    \hline
    &&\multicolumn{4}{c|}{BT}&\multicolumn{4}{c|}{TLBT}\\
    \cline{3-10}
    Ex.&r&$\cE_{\max}$&bound&$\boldsymbol{\sigma}_r$&$\rho$ &$\cE_{\max}$&bound&$\boldsymbol{\sigma}_r$&$\rho$\\
		\hline \multirow{3}{*}{%
        \textit{skl}, $\tau=50$
    }
		&20&3.5e-01&9.3e-01&1.1e+01&0.9728&2.2e-01&6.9e-01&6.5e+00&0.9584\\ 
&40&1.0e-02&3.3e-02&2.4e-01&0.9717&1.3e-03&3.6e-03&2.2e-02&0.9852\\ 
&60&6.2e-05&1.8e-02&2.7e-03&0.9650&4.6e-07&6.9e-04&1.1e-05&1.0008\\ 
\hline
\multirow{3}{*}{%
        \textit{jac}, $\tau=200$
    }
&40&8.1e-01&2.7e+00&5.9e+01&0.99985&1.8e-01&6.5e-01&6.8e+00&1.00019\\ 
&60&1.9e-01&1.7e+00&6.7e+00&0.99986&8.9e-03&6.1e-01&1.9e-01&1.00030\\ 
&80&2.1e-02&2.0e+00&6.8e-01&0.99985&1.5e-04&6.6e-01&6.0e-03&1.00000\\ 
\hline
\multirow{3}{*}{%
        \textit{GS}, $\tau=150$
    }
&40&1.3e-01&2.2e+00&2.7e+00&0.99971&1.4e-02&3.6e-01&3.2e-01&0.99964\\ 
&60&5.5e-03&2.3e+00&1.0e-01&0.99971&2.2e-04&5.0e-01&4.6e-03&0.99988\\ 
&80&1.3e-04&1.5e+00&3.8e-03&0.99971&3.9e-06&7.2e-01&5.4e-05&1.00592\\ 
		\hline
  \end{tabularx}\label{tab:mor_results}
\end{table}

\begin{figure}[t]
%
%
\begin{tikzpicture}
\pgfplotstableread{errorplot_skl_50_40.dat}\loadedtable

\begin{axis}[%
cycle list name=error,
width=0.951\wex,
height=\hex, 
scale only axis,
every outer x axis line/.append style={white!15!black},
every x tick label/.append style={font=\color{white!15!black}},
xmin=0,
xmax=250,
xlabel={k},
every outer y axis line/.append style={white!15!black},
every y tick label/.append style={font=\color{white!15!black}},
ymode=log,
ymin=1e-08,
ymax=100,
yminorticks=true,
xmajorgrids,
ymajorgrids,
ylabel={error (bound)},
legend style={draw=white!15!black,fill=white,legend cell align=left,at={(0.98,0.03)},font=\small, anchor=south east},
legend entries={error BT, error TLBT,bound BT-Eq.~\eqref{eq:InfHorErrorBound},$\boldsymbol{\sigma}_r$ BT,bound TLBT-Prop.~\ref{prop:FirstErrorBound},$\boldsymbol{\sigma}_r$ TLBT}
]
\addplot table[x index=0,y index=4] {\loadedtable};

\addplot table[x index=0,y index=5] {\loadedtable};

\addplot table[x index=0,y index=6] {\loadedtable};

\addplot table[x index=0,y index=7] {\loadedtable};

\addplot table[x index=0,y index=8] {\loadedtable};

\addplot table[x index=0,y index=9] {\loadedtable};

\addplot [color=black,dashed]
  table[row sep=crcr]{50	1e-08\\
50	100\\
};
\end{axis}
\end{tikzpicture}%
\caption{Output errors $\|y(k)-\hy(k)\|_2$, error bounds, and sum of neglected HSVs $\boldsymbol{\sigma}_r$ for (TL)BT reduction of \textit{skl} example to order $r=40$ with time limit $\tau=50$.}\label{fig:morerror_skl}
\end{figure}
\begin{figure}[t]
%
%
\begin{tikzpicture}
\pgfplotstableread{errorplot_jac_200_60.dat}\loadedtable

\begin{axis}[%
cycle list name=error,
width=0.951\wex,
height=\hex, 
scale only axis,
separate axis lines,
every outer x axis line/.append style={white!15!black},
every x tick label/.append style={font=\color{white!15!black}},
xmin=0,
xmax=1000,
xlabel={k},
every outer y axis line/.append style={white!15!black},
every y tick label/.append style={font=\color{white!15!black}},
ymode=log,
ymin=1e-05,
ymax=100,
yminorticks=true,
xmajorgrids,
ymajorgrids,
ylabel={error (bound)},
legend style={draw=white!15!black,fill=white,legend cell align=left,at={(0.98,0.03)},font=\small, anchor=south east,},
legend entries={error BT, error TLBT,bound BT-Eq.~\eqref{eq:InfHorErrorBound},$\boldsymbol{\sigma}_r$ BT,bound TLBT-Prop.~\ref{prop:FirstErrorBound},$\boldsymbol{\sigma}_r$ TLBT}
]
\addplot table[x index=0,y index=4] {\loadedtable};

\addplot table[x index=0,y index=5] {\loadedtable};

\addplot table[x index=0,y index=6] {\loadedtable};

\addplot table[x index=0,y index=7] {\loadedtable};

\addplot table[x index=0,y index=8] {\loadedtable};

\addplot table[x index=0,y index=9] {\loadedtable};

\addplot [color=black,dashed,forget plot]
  table[row sep=crcr]{200	1e-05\\
200	100\\
};
\end{axis}
\end{tikzpicture}%
\caption{Output errors $\|y(k)-\hy(k)\|_2$, error bounds, and sum of neglected HSVs $\boldsymbol{\sigma}_r$ for (TL)BT reduction of \textit{jac} example to order $r=60$ with time limit $\tau=200$.}\label{fig:morerror_jac}
\end{figure}
\begin{figure}[t]
%
%
\begin{tikzpicture}
\pgfplotstableread{errorplot_GS_150_80.dat}\loadedtable

\begin{axis}[%
cycle list name=error,
width=0.951\wex,
height=\hex, 
scale only axis,
separate axis lines,
every outer x axis line/.append style={white!15!black},
every x tick label/.append style={font=\color{white!15!black}},
xmin=0,
xmax=700,
xlabel={k},
every outer y axis line/.append style={white!15!black},
every y tick label/.append style={font=\color{white!15!black}},
ymode=log,
ymin=1e-12,
ymax=100,
yminorticks=true,
xmajorgrids,
ymajorgrids,
ylabel={error (bound)},
legend style={draw=white!15!black,fill=white,legend cell align=left,at={(0.98,0.03)},font=\small, anchor=south east},
legend entries={error BT, error TLBT,bound BT-Eq.~\eqref{eq:InfHorErrorBound},$\boldsymbol{\sigma}_r$ BT,bound TLBT-Prop.~\ref{prop:FirstErrorBound},$\boldsymbol{\sigma}_r$ TLBT}
]
\addplot table[x index=0,y index=4] {\loadedtable};

\addplot table[x index=0,y index=5] {\loadedtable};

\addplot table[x index=0,y index=6] {\loadedtable};

\addplot table[x index=0,y index=7] {\loadedtable};

\addplot table[x index=0,y index=8] {\loadedtable};

\addplot table[x index=0,y index=9] {\loadedtable};

\addplot [color=black,dashed,forget plot]
  table[row sep=crcr]{150	1e-12\\
150	100\\
};
\end{axis}
\end{tikzpicture}%
\caption{Output errors $\|y(k)-\hy(k)\|_2$, error bounds, and sum of neglected HSVs $\boldsymbol{\sigma}_r$ for (TL)BT reduction of \textit{GS} example to order $r=80$ with time limit $\tau=150$.}\label{fig:morerror_GS}
\end{figure}

\begin{table}[t]
  \centering
  \caption{Results and error bounds for BT and TLBT model reduction, where the reduced orders are adaptively determined via~\eqref{adaptorderhsv} for different $\epsilon_{hsv}$.}\scriptsize
  \begin{tabularx}{1\linewidth}{|X|l|l|l|l|l|l|l|l|l|l|l|}
    \hline
    &&\multicolumn{5}{c|}{BT}&\multicolumn{5}{c|}{TLBT}\\
    \cline{2-12}
    Ex.&$\epsilon_{hsv}$&$r$&$\cE_{\max}$&bound&$\boldsymbol{\sigma}_r$&$\rho$&$r$&$\cE_{\max}$&bound&$\boldsymbol{\sigma}_r$&$\rho$\\
		\hline
\multirow{3}{*}{%
        \textit{skl}, $\tau=50$
    }		
&1.0e-01&45&3.0e-03&2.0e-02&7.9e-02&0.96659&36&5.0e-03&8.8e-03&8.0e-02&0.96958\\ 
&1.0e-02&55&3.3e-04&1.8e-02&8.8e-03&0.96885&43&3.4e-04&3.6e-03&7.5e-03&0.97627\\ 
&1.0e-03&65&4.9e-05&1.8e-02&8.3e-04&0.96501&49&5.7e-05&1.8e-03&9.0e-04&0.97347\\ 		
\hline
\multirow{3}{*}{%
        \textit{jac}, $\tau=150$
    }		
&1.0e-01&97&2.2e-03&1.8e+00&9.5e-02&0.99985&64&2.1e-03&2.8e-01&9.2e-02&1.00513\\ 
&1.0e-02&115&1.6e-04&1.7e+00&9.3e-03&0.99985&78&2.2e-04&5.9e-01&8.9e-03&1.00049\\ 
&1.0e-03&133&2.7e-05&1.9e+00&9.4e-04&0.99985&90&2.6e-05&3.1e-01&9.4e-04&1.00059\\ 		
\hline
\multirow{3}{*}{%
        \textit{GS}, $\tau=150$
    }
&1.0e-01&61&5.5e-03&2.6e+00&9.9e-02&0.99971&46&7.1e-03&6.3e-01&9.8e-02&1.00073\\ 
&1.0e-02&75&2.1e-04&2.0e+00&9.3e-03&0.99971&58&3.3e-04&7.0e-01&8.4e-03&0.99987\\ 
&1.0e-03&88&2.5e-05&1.1e+00&9.7e-04&0.99971&69&2.4e-05&6.8e-01&8.1e-04&0.99981\\ 
		\hline
  \end{tabularx}\label{tab:mor_results_adapt}
\end{table}

\section{Conclusion}\label{sec:Conclusion}

In this paper, we study time-limited balanced truncation for discrete-time systems. The contributions o this work is dived in two parts. The first part is dedicated to developing output bounds for TLBT.  To this aim, we define the TL $h_2$ norm and its characterization using matrix equations. By means of this norm, we are able to express an error bound for the output. Afterward, we have analyzed the asymptotic behavior of these error bounds regarding the time-horizon, highlighting differences to the infinite time horizon as well as the continuous-time situation. The obtained bounds furthermore indicate that the neglected Hankel singular values can be used for an automatic reduced order determination.

The second part of this work is dedicated to computational aspects in  large-scale settings. Therein, the solution of  the TL Stein equations is obtained by using low-rank factorizations. Inspired by the continuous-time situation, rational Krylov subspace methods are proposed for computing the low rank solution factors. Furthermore, we discussed the residual and error bound computations as well as the selection of shift parameters for the rational Krylov subspace method. Finally, the algorithms are tested in large-scale examples, and the results are compared with other methods. The time-limited BT approach typically leads to more accurate ROMs in the considered time interval compared to infinite BT, which is also revealed by the smaller values of the corresponding output error bounds. As in the continuous-time case, TLBT occasionally returned unstable ROMS, which might be circumvented in investigations along the lines of, e.g.,~\cite{morGugA04}.
The proposed low-rank methods for the arising Stein equations returned satisfactory results for the application in the MOR context with respect to both computing time and accuracy. However, further research is required to bring them to the same level of efficiency as their continuous-time counterparts~\cite{DruS11,Kue16}. Especially the shift parameter selection for discrete-time problems should be improved in future research endeavours.

\section*{Acknowledgements}
Thank goes to Stefano Massei (EPF Lausanne) and Stefan Guettel (U Manchester) for helpful hints regarding the shift parameter selection for discrete-time problems, and to Michiel Hochstenbach (TU Eindhoven) for providing a \matlab{} routine for estimating the numerical radius of a matrix.
The larger part of this work was done while PK was still affiliated with the MPI Magdeburg.
\bibliographystyle{elsarticle-num} 
\bibliography{mor,igorBiblio,csc}

\end{document}